\newtheorem{theorem}{Theorem}[section]
\newtheorem{lemma}[theorem]{Lemma}
\newtheorem{proposition}[theorem]{Proposition}
\newtheorem{corollary}[theorem]{Corollary}
\newcommand{\pl}{\phantom{-}}
\newcommand{\mi}{-}
\title{Permutations with Ascending and Descending Blocks}
\author{Jacob Steinhardt}
\begin{document}

\maketitle

\begin{abstract}
We investigate permutations in terms of their cycle structure and descent set. To do this, we generalize the classical bijection of Gessel and Reutenauer to deal with permutations that have some ascending and some descending blocks. We then provide the first bijective proofs of some known results. We also solve some problems posed in \cite{EFW} by Eriksen, Freij, and W\"{a}stlund, who study derangements that descend in blocks of prescribed lengths.
\end{abstract}

\section{Introduction} 


We consider permutations in terms of their descent set and conjugacy class (equivalently, cycle structure). Let $\pi$ be a permutation on $\{1,\ldots,n\}$. An \emph{ascent} of $\pi$ is an index $i$, $1 \leq i < n$, such that $\pi(i) < \pi(i+1)$. A \emph{descent} of $\pi$ is such an index with $\pi(i) > \pi(i+1)$.


The study of permutations by descent set and cycle structure goes back at least as far as 1993, when Gessel and Reutenauer enumerated them using symmetric functions \cite{GR}. In their proof, they obtained a bijection from permutations with at most a given descent set to multisets of necklaces with certain properties. By a \emph{necklace} we mean a directed cycle, taken up to cyclic rotations, where the vertices are usually assigned colors or numbers. Multisets of necklaces are usually referred to as \emph{ornaments}. Figure \ref{necklace-example} illustrates these terms. 


\begin{figure}[b]

\centering

\begin{tikzpicture}[shorten >=1pt,->]
	\tikzstyle{vw}=[circle,draw=black!100,fill=white,minimum size=17pt,inner sep=0pt]
	\tikzstyle{vb}=[circle,draw=black!100,fill=black!25,minimum size=17pt,inner sep=0pt]


\node[xshift=0cm,yshift=0.2cm] {(a)};

	\foreach \name/\angle/\color in {P1-1/162/vw,
				P1-2/234/vb, P1-3/306/vb,	P1-4/18/vw, P1-5/90/vb}
		\node[\color,xshift=1.7cm,yshift=0.2cm] (\name) at (\angle:1cm) {};

\node[xshift=3.8cm,yshift=0.2cm] {(b)};

	\foreach \name/\angle/\color in {P2-1/162/vb,
				P2-2/234/vb, P2-3/306/vw,	P2-4/18/vb, P2-5/90/vw}
		\node[\color,xshift=5.5cm,yshift=0.2cm] (\name) at (\angle:1cm) {};

	\foreach \from/\to in {1/2,2/3,3/4,4/5,5/1}{
		\draw (P1-\from) -- (P1-\to);
		\draw (P2-\from) -- (P2-\to);
	}

\node[xshift=7.6cm,yshift=0.2cm] {(c)};

	\foreach \name/\angle/\color in {T1-1/90/vw,T1-2/210/vw,T1-3/330/vb}
		\node[\color,xshift=9.3cm,yshift=0.0cm] (\name) at (\angle:1cm) {};
	
	\foreach \name/\angle/\color in {T2-1/90/vb,T2-2/210/vw,T2-3/330/vb}
		\node[\color,xshift=12.0cm,yshift=0.0cm] (\name) at (\angle:1cm) {};

	\foreach \from/\to in {1/2,2/3,3/1}{
		\draw (T1-\from) -- (T1-\to);
		\draw (T2-\from) -- (T2-\to);
	}

\node[vb,xshift=14.4cm,yshift=0.2cm] (X) at (0,0) {};

\draw [->] (X) to [loop above] (X);

\end{tikzpicture}

\caption{Examples of necklaces and ornaments. (a) and (b) are two different representations of the same necklace with 5 vertices. (c) is an ornament with two different 3-cycles and a 1-cycle.}
\label{necklace-example}

\end{figure}
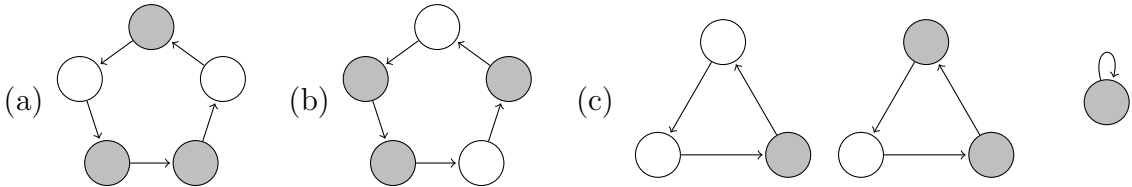

The Gessel-Reutenauer bijection preserves cycle structure. It also forgets other structure that is not so relevant, making it easier to study permutations by cycle structure and descent set. We will restate Gessel's and Reutenauer's result to bring it closer to the language of more recent work (\cite{HX}, \cite{EFW}). Choose $a_1,\ldots,a_k$ with $a_1+\cdots+a_k = n$, and partition $\{1,\ldots,n\}$ into consecutive blocks $A_1,\ldots,A_k$ with $|A_i| = a_i$. An \emph{$(a_1,\ldots,a_k)$-ascending permutation} is a permutation $\pi$ that ascends within each of the blocks $A_1,\ldots,A_k$. This is the same as saying that the descent set of $\pi$ is contained in $\{a_1,a_1+a_2,\ldots,a_1+a_2+\cdots+a_{k-1}\}$. In this language, the Gessel-Reutenauer bijection is a map from $(a_1,\ldots,a_k)$-ascending permutations to ornaments that preserves cycle structure.

We provide a generalization of the Gessel-Reutenauer bijection to deal with both ascending and descending blocks. Let $A = (a_1,\ldots,a_k)$ and $S \subset \{1,\ldots,k\}$. Then an \emph{$(a_1,\ldots,a_k,S)$-permutation} (or just an $(A,S)$-permutation if $a_1,\ldots,a_k$ are clear from context) is a permutation that descends in the blocks $A_i$ for $i \in S$ and ascends in all of the other blocks. We generalize the Gessel-Reutenauer bijection to give a cycle-structure-preserving bijection from the $(A,S)$-permutations to ornaments with certain properties. Our bijection can be thought of as equivalent to Reiner's \cite{Rei1} bijection for signed permutations, as a descent for normal permutations is the same as an ascent over negative values for signed permutations.

Both here and in \cite{GR}, the Gessel-Reutenauer bijection is easy to describe. We take a permutation $\pi$, write it as a product of disjoint cycles, and replace each element of each cycle by the block it belongs to. A permutation and its image under the bijection is illustrated later in the paper, in Figures \ref{GR-bijection-example-2} and \ref{GR-bijection-example}, respectively. Since the Gessel-Reutenauer bijection forgets so much structure, the surprising thing is that it is injective.

We describe the image of our bijection in Theorem \ref{GR-bijection}. Using this bijection, we obtain a second bijection onto ornaments, but this time the ornaments have properties that are easier to describe. The tradeoff is that the bijection no longer preserves cycle structure, but it is not too difficult to describe how the cycle structure changes. This bijection is described in Proposition \ref{auxiliary-bijection}.



These bijections allow us to take a purely combinatorial approach to the problems considered in \cite{HX} and \cite{EFW}. In \cite{HX}, Han and Xin, motivated by a problem of Stanley \cite{Sta1}, study the \emph{$(a_1,\ldots,a_k)$-descending} derangements, meaning derangements that descend in each of the blocks $A_1,\ldots,A_k$ (so, in our language, the case when $S = \{1,\ldots,k\}$). Han and Xin use symmetric functions to prove their results. In \cite{EFW}, Eriksen, Freij, and W\"{a}stlund also study the $(a_1,\ldots,a_k)$-descending derangements, but they use generating functions instead of symmetric functions.

Eriksen et al.\ show that the number of $(a_1,\ldots,a_k)$-descending derangements is symmetric in $a_1,\ldots,a_k$ and ask for a bijective proof of this fact. We obtain a bijective proof of the following stronger statement.

\newtheorem*{hack1}{Corollary \ref{EFW-symmetric}\footnote{Sergei Elizalde proves a slightly less general version of Corollary \ref{EFW-symmetric} as Proposition 4.2 of \cite{Eli}.}}
\begin{hack1}
Let $\sigma$ be a permutation of $\{1,\ldots,k\}$ and let $\mathcal{C}$ be a conjugacy class in $S_n$. The number of $(a_1,\ldots,a_k,S)$-permutations in $\mathcal{C}$ is the same as the number of $(a_{\sigma(1)},\ldots,a_{\sigma(k)},\sigma(S))$-permutations in $\mathcal{C}$.
\end{hack1}

\noindent Eriksen et al.\ also show that the number of $(a_1,\ldots,a_k)$-descending derangements is

\[
\sum_{0 \leq b_m \leq a_m, m = 1,\ldots,k} (-1)^{\sum b_i} \binom{\sum (a_i-b_i)}{a_1-b_1,\ldots,a_k-b_k}.
\]

\noindent They do this using the generating function

\[
\frac{1}{1-x_1-\cdots-x_k}\left(\frac{1}{1+x_1}\cdots\frac{1}{1+x_k}\right)
\]

\noindent for the $(a_1,\ldots,a_k)$-descending derangements, which first appears in \cite{HX}. They ask for a combinatorial proof of their formula using inclusion-exclusion. They also ask for a similar enumeration of the $(a_1,\ldots,a_k)$-ascending derangements. We provide both of these as a corollary to Proposition \ref{auxiliary-bijection}.

\newtheorem*{hack1.5}{Corollary \ref{EFW-enumerate}}
\begin{hack1.5}
The number of $(A,S)$-derangements is the coefficient of $x_1^{a_1}\cdots x_k^{a_k}$ in 

\[
\frac{1}{1-x_1-\cdots-x_k} \left(\frac{\prod_{i \not\in S} (1-x_i)}{\prod_{i \in S} (1+x_i)}\right).
\]

\noindent Let $l_m = a_m$ if $m \in S$ and let $l_m = 1$ otherwise. The number of $(A,S)$-derangements is also

\[
\sum_{0 \leq b_m \leq l_m, m=1,\ldots,k} (-1)^{\sum b_i} \binom{\sum (a_i-b_i)}{a_1-b_1,\ldots,a_k-b_k}.
\]
\end{hack1.5}

It is also possible to prove Corollary \ref{EFW-enumerate} more directly using some structural lemmas about $(A,S)$-derangements and standard techniques in recursive enumeration. We do this in \cite{Ste2}.


Our bijective methods also apply to some of the results in the original paper by Gessel and Reutenauer. The $(a_1,\ldots,a_k)$-ascending permutations are all permutations with at most a given descent set. By using inclusion-exclusion on the $(a_1,\ldots,a_k)$-ascending permutations, we can study the number of permutations with \emph{exactly} a given descent set. We can do the same thing with the $(a_1,\ldots,a_k)$-descending permutations. It turns out that comparing the two allows us to see what happens when we take the complement of the descent set. In \cite{GR}, Gessel and Reutenauer prove the following two theorems.

\newtheorem*{hack1.6}{Theorem 4.1 of \cite{GR}}
\begin{hack1.6}
\label{GR-thm-4.1}
Associate to each conjugacy class of $S_n$ a partition $\lambda$ based on cycle structure. If $\lambda$ has no parts congruent to $2$ modulo $4$ and every odd part of $\lambda$ occurs only once, then the number of permutations of cycle structure $\lambda$ with a given descent set is equal to the number of permutations of cycle structure $\lambda$ with the complementary descent set.
\end{hack1.6}

\newtheorem*{hack1.7}{Theorem 4.2 of \cite{GR}}
\begin{hack1.7}
\label{GR-thm-4.2}
The number of involutions in $S_n$ with a given descent set is equal to the number of involutions in $S_n$ with the complementary descent set.
\end{hack1.7}

We obtain Theorem 4.1 of \cite{GR} as a consequence of Corollary \ref{GR-4.1} by setting $S$ to $\emptyset$. Corollary \ref{GR-4.1} deals with permutations with \emph{at least} a given ascent or descent set, but as noted before we can apply inclusion-exclusion to get the same result about pemutations with \emph{exactly} a given ascent or descent set.

\newtheorem*{hack2}{Corollary \ref{GR-4.1}}
\begin{hack2}
Associate to each conjugacy class $\mathcal{C}$ of $S_n$ a partition $\lambda$ of $n$ based on cycle structure.

The number of $(A,S)$-permutations in $\mathcal{C}$ is the same if we replace $S$ by $\{1,\ldots,k\}\backslash S$, assuming that all odd parts of $\lambda$ are distinct and $\lambda$ has no parts congruent to $2$ mod $4$.
\end{hack2}

To our knowledge, this is the first bijective proof of Theorem 4.1 of \cite{GR}. We also obtain the following generalization of Theorem 4.2 of \cite{GR}.

\newtheorem*{hack3}{Corollary \ref{GR-4.2}}
\begin{hack3}
The number of $(A,S)$-involutions is the same if we replace $S$ by $\{1,\ldots,k\}\backslash S$.
\end{hack3}

This is the first known bijective proof of Theorem 4.2 of \cite{GR}.


The rest of the paper is divided into five sections. In Section \ref{bijections}, we describe the two bijections used in the remainder of the paper and prove that they are bijections. In Section \ref{corollaries}, we prove the corollaries resulting from these bijections. In Section \ref{conclusion}, we discuss directions of further research, including the study of some maps related to the Gessel-Reutenauer bijection as well as a generalization of a polynomial identity arising in \cite{EFW}.

We also define all the terms used in this paper in Section \ref{defns}, which occurs after the Acknowledgements and before the Bibliography. These terms are all defined either in the introduction or as they appear in the paper, but we have also collected them in a single location for easy reference.

\section{The Two Bijections}
\label{bijections}

We now describe our two bijections. Here and later, we will have occasion to talk about ornaments labeled by $\{1,\ldots,k\}$. In this case we call the integers $1$ through $k$ \emph{colors}, the elements in $S$ \emph{descending colors}, and the elements not in $S$ \emph{ascending colors}.

Our first bijection is from the $(A,S)$-permutations with conjugacy class $\mathcal{C}$ to ornaments with the same cycle structure.


Before formally defining the bijection, we will give an illustrative example. Let us suppose that we were considering the $((8,10),\{1\})$-permutations---in other words, permutations that descend in a block of length $8$ and then ascend in a block of length $10$. In particular, we will take the permutation $\pi = 18 \ 17 \ 15 \ 14 \ 13 \ 12 \ 11 \ 9 \ 1 \ 2 \ 3 \ 4 \ 5 \ 6 \ 7 \ 8 \ 10 \ 16$. This permutation has cycle structure $(1 \ 18 \ 16 \ 8 \ 9)(2 \ 17 \ 10)(3 \ 15 \ 7 \ 11)(4 \ 14 \ 6 \ 12)(5 \ 13)$. If we replace each vertex in each cycle by the block it belongs to ($A_1$ or $A_2$), then we get $(1 \ 2 \ 2 \ 1 \ 2)(1 \ 2 \ 2)(1 \ 2 \ 1 \ 2)(1 \ 2 \ 1 \ 2)(1 \ 2)$, which corresponds to the ornament depicted in Figure \ref{GR-bijection-example}. Under our bijection, we send $\pi$ to this ornament.


\begin{figure}[b]

\centering

\begin{tikzpicture}[shorten >=1pt,->]
	\tikzstyle{vw}=[circle,draw=black!100,fill=white,minimum size=17pt,inner sep=0pt]
	\tikzstyle{vb}=[circle,draw=black!100,fill=black!25,minimum size=17pt,inner sep=0pt]

	\foreach \name/\angle/\color/\text in {P-1/162/vw/A,
				P-2/234/vb/B, P-3/306/vb/C,	P-4/18/vw/D, P-5/90/vb/E}
		\node[\color,xshift=0cm,yshift=0.2cm] (\name) at (\angle:1cm) {$\text$};

	\foreach \from/\to in {1/2,2/3,3/4,4/5,5/1}
		\draw (P-\from) -- (P-\to);
	
	\foreach \name/\angle/\color/\text in {T-1/90/vw/F,
				T-2/210/vb/G, T-3/330/vb/H}
		\node[\color,xshift=2.7cm,yshift=0cm] (\name) at (\angle:1cm) {$\text$};

	\foreach \from/\to in {1/2,2/3,3/1}
		\draw (T-\from) -- (T-\to);


	\foreach \name/\angle/\color/\text in {S1-1/135/vw/I,
				S1-2/225/vb/J, S1-3/315/vw/K, S1-4/45/vb/L}
		\node[\color,xshift=5.3cm,yshift=0.1cm] (\name) at (\angle:1cm) {$\text$};

	\foreach \name/\angle/\color/\text in {S2-1/135/vw/M,
				S2-2/225/vb/N, S2-3/315/vw/O, S2-4/45/vb/P}
		\node[\color,xshift=7.8cm,yshift=0.1cm] (\name) at (\angle:1cm) {$\text$};

	\foreach \from/\to in {1/2,2/3,3/4,4/1}{
		\draw (S1-\from) -- (S1-\to);
		\draw	(S2-\from) -- (S2-\to);
	}


	\foreach \name/\angle/\color/\text in {L-1/180/vw/Q,L-2/0/vb/R}
		\node[\color,xshift=10.5cm,yshift=0cm] (\name) at (\angle:1cm) {$\text$};

	\draw [->] (L-1) to [bend right=37] (L-2);
	\draw [->] (L-2) to [bend right=37] (L-1);

\end{tikzpicture}

\caption{The image of the permutation $\pi = (1 \ 18 \ 16 \ 8 \ 9)(2 \ 17 \ 10)(3 \ 15 \ 7 \ 11)(4 \ 14 \ 6 \ 12)(5 \ 13)$ under our bijection. White vertices came from block $A_1$ and grey vertices came from block $A_2$. The labels $A$ through $R$ are only for the later convenience of referring to specific vertices.}
\label{GR-bijection-example}

\end{figure}


Now suppose that we wanted to go in the reverse direction. Starting with the ornament depicted in Figure \ref{GR-bijection-example}, how do we know what permutation it came from? We need to replace $A$ through $R$ with the integers $1$ through $18$ so that the resulting permutation descends in $A_1$ and ascends in $A_2$. The colors of each vertex (white or grey, denoting block $1$ or block $2$, respectively) narrow the possibilities down somewhat, as they tell us whether a vertex comes from $A_1$ or $A_2$.

We start out by trying to determine the relative ordering of pairs of vertices. We know immediately (since all elements of $A_1$ come before all elements of $A_2$) that $A$, $D$, $F$, $I$, $K$, $M$, $O$, $Q$ all come before $B$, $C$, $E$, $G$, $H$, $J$, $L$, $N$, $P$, $R$. We also know, for example, that $E < B$ since $\pi(E) < \pi(B)$ (as $\pi(E) = A$, $\pi(B) = C$, and $A < C$) and $B$ and $E$ come from $A_2$, which must ascend.

We can then determine that $A < D$ since $\pi(A) = B$, $\pi(D) = E$, and $B > E$. Continuing, we see that $C > E$ since $\pi(C) = D$, $\pi(E) = A$, and $D > A$.

We can get a lot of information by making these sorts of comparisons, but we would like something a bit more methodical so that we can piece together all the information at the end. We can do this by starting at two vertices and ``looking forward'' along the paths from those vertices until the paths differ. For example, we could determine that $C > E$ as follows:

Starting from $C$, we see vertices colored grey, white, grey, white. For clarity, we will call these vertices $c_1$, $c_2$, $c_3$, and $c_4$. Starting from $E$, we see vertices colored grey, white, grey, grey. We will call these vertices $e_1$, $e_2$, $e_3$, and $e_4$. Since $c_4$ is white and $e_4$ is grey, we must have $c_4 < e_4$. Then $c_3$ and $e_3$ both come from $A_2$, so $c_3 < e_3$. Then $c_2 > e_2$, and $c_1 > e_1$. Since $C = c_1$ and $E = e_1$, we deduce that $C > E$.

This sort of logic is captured more formally in the following lemma and its corollary.

\begin{lemma}
\label{walk-lemma}
Given a vertex $v$ of a necklace, define the sequence $W(v) = \{w_0(v),w_1(v),\ldots\}$ by $w_0(v) = v$, $w_{i+1}(v) = s(w_i(v))$, where $s(x)$ is the successor of $x$ in the necklace. Thus $w_0,w_1,\ldots$ is the sequence of colors one encounters if one starts at the vertex $v$ and walks along the cycle containing $v$.

If two vertices $v$ and $v'$ have sequences of colors that agree through $w_{l-1}$, then the order of $v$ and $v'$ is determined by the order of $w_{l}(v)$ and $w_{l}(v')$. In fact, if $\{w_1,\ldots,w_{l-1}\}$ has an even number of vertices from descending blocks, then $v$ and $v'$ come in the same order as $w_{l}(v)$ and $w_{l}(v')$. Otherwise, they come in the opposite order.
\end{lemma}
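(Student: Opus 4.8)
The plan is to reduce the comparison of $v$ and $v'$ to a comparison one step further along their necklaces, and then to iterate this reduction $l$ times. The engine is a single-step rule: if two vertices $u$ and $u'$ lie in the same block (equivalently, have the same color $c$), then because $\pi$ either ascends or descends throughout that block, $u < u' \iff \pi(u) < \pi(u')$ when $c$ is an ascending color, while $u < u' \iff \pi(u) > \pi(u')$ when $c$ is a descending color. Since $\pi(u) = s(u) = w_1(u)$, this says precisely that $u$ and $u'$ come in the same relative order as $w_1(u)$ and $w_1(u')$ when $c$ is ascending, and in the opposite order when $c$ is descending. I would record this as a sign $\epsilon(c) \in \{+1,-1\}$, equal to $+1$ on ascending colors and $-1$ on descending colors. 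This rule is exactly the computation used in the worked example to conclude $A < D$ from $\pi(A) = B > E = \pi(D)$ together with the fact that block $A_1$ descends.

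First I would apply this rule at $v$ and $v'$ themselves, which share the color $w_0(v) = w_0(v')$; this expresses the order of $v, v'$ in terms of the order of $w_1(v), w_1(v')$, up to the sign $\epsilon(w_0)$. Because the color sequences agree through $w_{l-1}$, the pair $w_1(v), w_1(v')$ again shares an initial color, so the rule applies once more, and so on. Formally I would prove by induction on $l$ that, whenever the color sequences of $v$ and $v'$ agree through index $l-1$, the order of $v$ and $v'$ equals the order of $w_l(v)$ and $w_l(v')$ multiplied by the product $\epsilon(w_0)\epsilon(w_1)\cdots\epsilon(w_{l-1})$. The base case $l = 1$ is the single-step rule. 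For the inductive step I would apply the single-step rule once at $v, v'$ to pass to $w_1(v), w_1(v')$, then invoke the inductive hypothesis for this latter pair, whose color sequences now agree through index $l-2$; reindexing via $w_i(w_1(v)) = w_{i+1}(v)$ turns the accumulated sign into $\epsilon(w_1)\cdots\epsilon(w_{l-1})$, and combining the two contributions gives the full product. Note that the relation holds regardless of what happens at step $l$: the hypothesis is only that the first $l$ colors agree, so the lemma's claim that the order is \emph{determined} by $w_l(v)$ and $w_l(v')$ is exactly this telescoped identity.

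Finally, since $\epsilon$ takes the value $-1$ exactly on descending colors, the product $\epsilon(w_0)\cdots\epsilon(w_{l-1})$ equals $+1$ precisely when an even number of the shared vertices lie in descending blocks, and $-1$ otherwise, which yields the stated same-order/opposite-order dichotomy. I do not expect a serious obstacle, as the entire content sits in the single-step rule, immediate from the definition of an $(A,S)$-permutation. The one point demanding care is the bookkeeping of the accumulated sign: the reindexing $w_i \circ w_1 = w_{i+1}$ in the induction must be handled precisely, and one must be careful about exactly which shared vertices enter the parity count. In particular the starting vertex $w_0$ genuinely contributes — already the single step deducing $A < D$ relied on $A_1$ being descending, which flipped the order of $A, D$ relative to $\pi(A) = B$ and $\pi(D) = E$ — so the relevant count is over $w_0, \ldots, w_{l-1}$, and the telescoped sign must be attributed to exactly this set of colors.
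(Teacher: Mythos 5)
Your proof is correct and takes essentially the same route as the paper's: an induction on $l$ whose entire content is the single-step ascending/descending comparison rule, with the accumulated sign tracked as a parity of descending colors (the paper writes this sign as $(-1)^{r_l(v)}$ and appends the extra step at the far end of the walk, while you peel it off the front and reindex, a purely cosmetic difference). Your insistence that the parity count must run over $\{w_0,\ldots,w_{l-1}\}$, with the starting vertex included, is also right --- the lemma statement's $\{w_1,\ldots,w_{l-1}\}$ is an off-by-one slip, and the paper's own proof (via $r_l$) and Corollary \ref{walk-corollary} use exactly the count you identified.
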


\begin{corollary}
\label{walk-corollary}
Define another sequence $A(v) = \{a_0(v),a_1(v),\ldots\}$ by $a_i(v) = (-1)^{r_i(v)} w_i(v)$, where $r_i(v)$ is the number of vertices in $\{w_0(v),\ldots,w_{i-1}(v)\}$ that come from descending blocks. Then $v$ and $w$ come in the same order as $A(v)$ and $A(w)$, if we consider the latter pair in the lexicographic order.
\end{corollary}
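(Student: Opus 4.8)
The plan is to deduce the corollary from Lemma~\ref{walk-lemma} by analyzing the lexicographic comparison of $A(v)$ and $A(v')$ one coordinate at a time, and by showing that the first coordinate in which these two signed sequences disagree is governed precisely by the sign rule that the lemma predicts. (I write $v'$ for the ``$w$'' of the statement.) Throughout I use that the blocks are consecutive intervals, so that two vertices lying in \emph{different} blocks are ordered as integers exactly as their colors are ordered.

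First I would locate the decisive coordinate. Let $l$ be the least index at which the color sequences $W(v)$ and $W(v')$ differ, so that $w_i(v)=w_i(v')$ for $i<l$ while $w_l(v)\neq w_l(v')$. Since $r_i(v)$ depends only on the colors $w_0(v),\ldots,w_{i-1}(v)$, the agreement of colors for $i<l$ forces $r_i(v)=r_i(v')$ and hence $a_i(v)=(-1)^{r_i(v)}w_i(v)=a_i(v')$ for every $i<l$. At index $l$ the counts still agree, say $r_l(v)=r_l(v')=r$, as these too depend only on $w_0,\ldots,w_{l-1}$; so the common sign $(-1)^{r}$ multiplies two \emph{distinct} positive colors, giving $a_l(v)\neq a_l(v')$. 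Thus the first lexicographic disagreement between $A(v)$ and $A(v')$ occurs exactly at the first place $l$ where the colors differ.

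Next I would read off both orders at coordinate $l$ and match them. On one side, $A(v)$ precedes $A(v')$ lexicographically iff $a_l(v)<a_l(v')$, i.e. iff $(-1)^{r}w_l(v)<(-1)^{r}w_l(v')$; this means $w_l(v)<w_l(v')$ when $r$ is even and $w_l(v)>w_l(v')$ when $r$ is odd. On the other side, Lemma~\ref{walk-lemma} applies precisely here (the colors agree through $w_{l-1}$ and differ at $w_l$): the integers $v$ and $v'$ come in the \emph{same} order as $w_l(v)$ and $w_l(v')$ when the number $r$ of descending vertices seen before position $l$ is even, and in the \emph{opposite} order when $r$ is odd. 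Because $w_l(v)$ and $w_l(v')$ lie in different blocks, their integer order coincides with the order of their colors. Comparing the two rules case by case then yields $v<v' \iff A(v)<A(v')$ in both parities, which is the claim.

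The main obstacle — and the only real content beyond bookkeeping — is making the parities match exactly: the negative sign that $(-1)^{r}$ attaches to $a_l$ when $r$ is odd must be precisely what reverses the lexicographic comparison, so as to cancel the order-reversal that Lemma~\ref{walk-lemma} introduces in that same case. This pins down the relevant count $r$ as the number of descending vertices among $w_0,\ldots,w_{l-1}$, i.e. exactly $r_l$ as defined in the corollary; with that count the two reversals are consistent and the equivalence holds. A minor subsidiary point, needed so that the comparison defines a genuine total order, is that distinct vertices have color sequences that eventually differ, which holds for the aperiodic necklaces arising here; alternatively one reads the corollary as asserting the equivalence whenever $A(v)\neq A(v')$.
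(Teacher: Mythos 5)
Your proposal is correct and takes essentially the same route as the paper: locate the first position $l$ at which the signed walks disagree, note that this coincides with the first disagreement of the color walks, and apply Lemma \ref{walk-lemma} there, with the common sign $(-1)^{r_l}$ exactly cancelling the order reversal caused by descending blocks. The only differences are expository: you re-derive inline the equivalence of agreement of walks and signed walks (which the paper isolates as Lemma \ref{walk-equals-signed-walk}), and you spell out the bridging fact that vertices of different colors are ordered as their colors, a point the paper leaves implicit.
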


We call $W(v)$ the \emph{walk} from $v$ and $A(v)$ the \emph{signed walk} from $v$. We will prove Lemma \ref{walk-lemma} and Corollary \ref{walk-corollary} later in this section.

\begin{table}[b]
\caption{The first 7 terms of $A(v)$ for $v = A,\ldots,R$. We have ordered the entries lexicographically by $A(v)$.}
\label{GR-bijection-table}

\centering

\begin{tabular}{|| l | l || l | l || l | l ||}
\hline
A & $1, \mi 2, \mi 2, \mi 1, \pl 2, \pl 1, \mi 2$ & Q & $1, \mi 2, \mi 1, \pl 2, \pl 1, \mi 2, \mi 1$ & N & $2, \pl 1, \mi 2, \mi 1, \pl 2, \pl 1, \mi 2$ \\
F & $1, \mi 2, \mi 2, \mi 1, \pl 2, \pl 2, \pl 1$ & D & $1, \mi 2, \mi 1, \pl 2, \pl 2, \pl 1, \mi 2$ & P & $2, \pl 1, \mi 2, \mi 1, \pl 2, \pl 1, \mi 2$ \\
I & $1, \mi 2, \mi 1, \pl 2, \pl 1, \mi 2, \mi 1$ & E & $2, \pl 1, \mi 2, \mi 2, \mi 1, \pl 2, \pl 1$ & R & $2, \pl 1, \mi 2, \mi 1, \pl 2, \pl 1, \mi 2$ \\
K & $1, \mi 2, \mi 1, \pl 2, \pl 1, \mi 2, \mi 1$ & H & $2, \pl 1, \mi 2, \mi 2, \mi 1, \pl 2, \pl 2$ & C & $2, \pl 1, \mi 2, \mi 1, \pl 2, \pl 2, \pl 1$ \\
M & $1, \mi 2, \mi 1, \pl 2, \pl 1, \mi 2, \mi 1$ & J & $2, \pl 1, \mi 2, \mi 1, \pl 2, \pl 1, \mi 2$ & G & $2, \pl 2, \pl 1, \mi 2, \mi 2, \mi 1, \pl 2$ \\
O & $1, \mi 2, \mi 1, \pl 2, \pl 1, \mi 2, \mi 1$ & L & $2, \pl 1, \mi 2, \mi 1, \pl 2, \pl 1, \mi 2$ & B & $2, \pl 2, \pl 1, \mi 2, \mi 1, \pl 2, \pl 2$ \\
\hline
\end{tabular}

\end{table}

Table \ref{GR-bijection-table} gives the sequences $A(v)$ for $v = A,\ldots,R$. From this, we can determine the values of some of $A,\ldots,R$. We know that $A = 1$, $B = 18$, $C = 16$, $D = 8$, $E = 9$, $F = 2$, $G = 17$, and $H = 10$. The only uncertainty is what happens with $I$ through $R$, although we know at least that $\{I,K,M,O,Q\} = \{3,4,5,6,7\}$ and $\{J,L,N,P,R\} = \{11,12,13,14,15\}$.

By symmetry, we can assume that either $I$ or $Q$ is equal to $3$. Let's start by assuming that $Q$ is equal to $3$. Then $R$ must be $11$, since its successor is the smallest among the successors of $J,L,N,P,R$ and $A_2$ ascends. On the other hand, $R$ must be $15$, since it is the successor of $Q$, $Q$ is the smallest among $I,K,M,O,Q$, and $A_1$ descends. We have thus reached a contradiction, so $I$ \emph{must} be $3$.

Having determined the value of $I$, we see that $L$ must be $11$, $K$ must be $7$, and $J$ must be $15$. So we are left with assigning $M,O,Q$ to $4,5,6$ and $N,P,R$ to $12,13,14$. Again by symmetry we can assume that either $M$ or $Q$ is $4$. By the same logic as before, we can show that $Q$ cannot equal $4$ and so $M$ must be $4$. This forces $P = 12$, $O = 6$, and $N = 14$, which in turn forces $Q = 5$ and $R = 13$.

Putting this all together, we get the permutation depicted in Figure \ref{GR-bijection-example-2}, which is the permutation $\pi$ that we started with. 


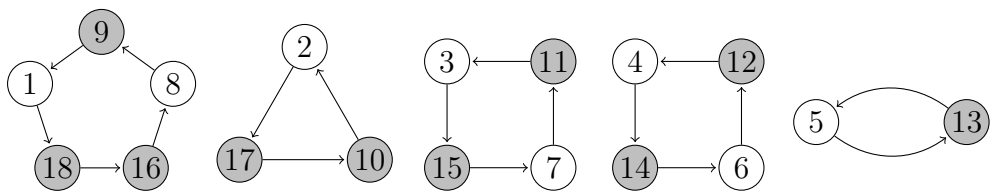
\begin{figure}[b]

\centering

\begin{tikzpicture}[shorten >=1pt,->]
	\tikzstyle{vw}=[circle,draw=black!100,fill=white,minimum size=17pt,inner sep=0pt]
	\tikzstyle{vb}=[circle,draw=black!100,fill=black!25,minimum size=17pt,inner sep=0pt]

	\foreach \name/\angle/\color/\text in {P-1/162/vw/1,
				P-2/234/vb/18, P-3/306/vb/16,	P-4/18/vw/8, P-5/90/vb/9}
		\node[\color,xshift=0cm,yshift=0.2cm] (\name) at (\angle:1cm) {$\text$};

	\foreach \from/\to in {1/2,2/3,3/4,4/5,5/1}
		\draw (P-\from) -- (P-\to);
	
	\foreach \name/\angle/\color/\text in {T-1/90/vw/2,
				T-2/210/vb/17, T-3/330/vb/10}
		\node[\color,xshift=2.7cm,yshift=0cm] (\name) at (\angle:1cm) {$\text$};

	\foreach \from/\to in {1/2,2/3,3/1}
		\draw (T-\from) -- (T-\to);


	\foreach \name/\angle/\color/\text in {S1-1/135/vw/3,
				S1-2/225/vb/15, S1-3/315/vw/7, S1-4/45/vb/11}
		\node[\color,xshift=5.3cm,yshift=0.1cm] (\name) at (\angle:1cm) {$\text$};

	\foreach \name/\angle/\color/\text in {S2-1/135/vw/4,
				S2-2/225/vb/14, S2-3/315/vw/6, S2-4/45/vb/12}
		\node[\color,xshift=7.8cm,yshift=0.1cm] (\name) at (\angle:1cm) {$\text$};

	\foreach \from/\to in {1/2,2/3,3/4,4/1}{
		\draw (S1-\from) -- (S1-\to);
		\draw	(S2-\from) -- (S2-\to);
	}


	\foreach \name/\angle/\color/\text in {L-1/180/vw/5,L-2/0/vb/13}
		\node[\color,xshift=10.5cm,yshift=0cm] (\name) at (\angle:1cm) {$\text$};

	\draw [->] (L-1) to [bend right=37] (L-2);
	\draw [->] (L-2) to [bend right=37] (L-1);

\end{tikzpicture}

\caption{The pre-image of the ornament in Figure \ref{GR-bijection-example} under our bijection. This yields the permutation $\pi = 18 \ 17 \ 15 \ 14 \ 13 \ 12 \ 11 \ 9 \ 1 \ 2 \ 3 \ 4 \ 5 \ 6 \ 7 \ 8 \ 10 \ 16$.}
\label{GR-bijection-example-2}

\end{figure}


We now state the bijection formally. The \emph{fundamental period} of a necklace is the smallest contiguous subsequence $P$ of the necklace such that the necklace can be obtained by concatenating $r$ copies of $P$ for some $r$. In this case, the necklace is said to be \emph{$r$-repeating}. Call an ornament \emph{$A$-compatible} if its vertices are labeled by $\{1,\ldots,k\}$ and exactly $a_i$ vertices are labeled by $i$.

\begin{theorem}
\label{GR-bijection}

There is an injection from the $(A,S)$-permutations to the $A$-compatible ornaments. Furthermore, this map preserves cycle structure. The image of the map is all $A$-compatible ornaments satisfying the following three conditions.

\begin{enumerate}

\item If the fundamental period of a necklace contains an even number of vertices from descending blocks, then the necklace is $1$-repeating.

\item If the fundamental period of a necklace contains an odd number of vertices from descending blocks, then the necklace is either $1$-repeating or $2$-repeating.

\item If a necklace contains an odd number of vertices from descending blocks, then there are no other necklaces identical to it in the ornament.

\end{enumerate}

\end{theorem}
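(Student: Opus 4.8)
The plan is to prove this in two halves: first that the Gessel-Reutenauer map (writing $\pi$ in disjoint cycles and replacing each element by its block index) lands in the set of $A$-compatible ornaments satisfying conditions (1)--(3), and second that it is a bijection onto exactly that set. That the map produces an $A$-compatible ornament is immediate, since $\pi$ has exactly $a_i$ elements in block $A_i$. The substance is injectivity together with the exact characterization of the image, and for both I would lean heavily on Corollary \ref{walk-corollary}: the signed walk $A(v)$ totally orders the vertices that lie in any single $(A,S)$-permutation, so recovering $\pi$ from an ornament amounts to sorting the vertices by their signed walks lexicographically.

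For injectivity and for constructing the inverse, I would argue as follows. Given an $A$-compatible ornament in the image, attempt to reconstruct $\pi$ by computing the signed walk $A(v)$ for every vertex $v$ and assigning the integer values $1,\ldots,n$ in the lexicographic order of the $A(v)$, respecting that the $a_1$ smallest labels go to color-$1$ vertices, the next $a_2$ to color-$2$ vertices, and so on. The key point, supplied by Corollary \ref{walk-corollary}, is that whenever two vertices have \emph{distinct} signed walks their relative order in any valid preimage is forced; so the preimage is unique \emph{provided} all signed walks are distinct, which gives injectivity on the nose. Two vertices $v,v'$ have identical signed walks $A(v)=A(v')$ exactly when they generate the same bi-infinite signed color sequence, i.e.\ they sit at corresponding positions either in the same periodic necklace or in two identical copies of a necklace. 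Analyzing when a signed walk is eventually periodic, and with what sign pattern, is precisely what produces conditions (1)--(3).

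The heart of the argument, and the step I expect to be the main obstacle, is translating ``all signed walks distinct / the forced order is consistent'' into the three stated conditions about fundamental periods and parity of descending vertices. The mechanism is this: walking once around a necklace returns to the same color sequence, but the \emph{sign} accumulated over one full period is $(-1)^{d}$, where $d$ is the number of descending-block vertices in the fundamental period. If $d$ is even, the signed walk $A(v)$ is genuinely periodic with the necklace's period, so two vertices at the same position of two identical necklaces, or at the repeat-offset within a properly $r$-repeating necklace, collide---forcing each such necklace to be $1$-repeating (condition 1) and forbidding two identical copies in that even case as well. If $d$ is odd, one pass flips all signs, so the signed walk only repeats after \emph{two} passes; this allows a necklace to be $2$-repeating (its fundamental period traversed twice gives the $2$-pass signed period) but nothing larger, yielding condition (2). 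The odd case also means two identical copies of such a necklace \emph{would} produce colliding signed walks---hence condition (3) forbids duplicates precisely when $d$ is odd. I would make this rigorous by computing, for a vertex at position $j$ of an $r$-repeating necklace of period length $p$, the exact signed walk and comparing positions $j$ and $j+p$, carefully tracking the sign $(-1)^{d}$; the parity bookkeeping here is the delicate part, since a sign discrepancy forces strict lexicographic distinctness (no collision, so the reconstruction is well-defined) whereas an exact match forces a collision (which must therefore be excluded from the image).

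Finally I would verify the converse inclusion---that every ornament satisfying (1)--(3) really is hit---by checking that under those conditions all signed walks are distinct, so the sorting-based reconstruction assigns $1,\ldots,n$ unambiguously and the resulting permutation genuinely descends in the blocks of $S$ and ascends elsewhere (a direct consequence of Corollary \ref{walk-corollary} applied in reverse). Cycle-structure preservation is then automatic: replacing each element of a cycle by its block color does not change the cycle lengths, so the multiset of necklace lengths equals the cycle type $\lambda$ of $\pi$.
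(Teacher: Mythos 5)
There is a genuine gap, and it sits exactly where you predicted: the tie/collision analysis. Your framework asserts that the preimage is unique \emph{provided} all signed walks are distinct, and that any collision of signed walks ``must therefore be excluded from the image.'' That characterization of the image is false, and it contradicts the theorem you are trying to prove. First, condition (3) forbids duplicate necklaces only when the number of descending vertices is \emph{odd}; duplicates of even-$d$ necklaces are allowed and do occur (e.g., with $S=\emptyset$ and one block, the identity permutation maps to $n$ identical $1$-cycles, all with the same signed walk), whereas your argument ``forbidding two identical copies in that even case as well'' would exclude them. Second, $2$-repeating necklaces with odd $d$ in the fundamental period are explicitly allowed by condition (2), yet they also contain signed-walk collisions: if the color sequence has period $p$, the vertices at positions $j$ and $j+p$ have identical walks $W$, hence identical signed walks by Lemma \ref{walk-equals-signed-walk}. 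Your sign-flip computation conflates two different things: the signed walk $A(v)$ of a \emph{single} vertex is indeed anti-periodic ($A(v)_{i+p}=-A(v)_i$ when $d$ is odd), but when you compare $A(v_j)$ with $A(v_{j+p})$, both sequences restart with $r_0=0$, and the accumulated sign $(-1)^d$ cancels: $A(v_{j+p})_i=(-1)^{-d}A(v_j)_{i+p}=(-1)^{-d}(-1)^d A(v_j)_i=A(v_j)_i$. So collisions occur throughout the image (in the paper's own running example, the vertices $I,K,M,O$ of Figure \ref{GR-bijection-example} share one signed walk), and injectivity cannot be ``on the nose'' in your sense.

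The missing idea is how to handle vertices with \emph{equal} signed walks. Such vertices are interchangeable (any relabeling among them produces the same permutation), so collisions do not threaten injectivity; what is actually forced, beyond the labeling, is the \emph{successor structure}. The paper formalizes this with packets (equivalence classes of vertices under equality of signed walks), orbits of packets under the successor map, and templates: Corollary \ref{walk-corollary} forces the labeling up to symmetry, and then within each packet the successors must be order-preserving (ascending packet) or order-reversing (descending packet). The cycle analysis of this forced successor structure---an orbit of $x$ packets of size $y$ with $d$ descending packets yields $y$ cycles of length $x$ when $d$ is even, and $\lfloor y/2\rfloor$ $2$-repeating cycles of length $2x$ plus possibly one $1$-repeating cycle when $d$ is odd---is precisely what produces conditions (1)--(3), including the asymmetry between the even and odd cases that your collision criterion cannot see. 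To repair your proof you would need to replace ``all signed walks distinct'' by this packet-level uniqueness argument, at which point you have essentially reconstructed the paper's proof.
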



\begin{proof}

We begin by describing the bijection formally. We take a permutation $\pi$, write it as a product of disjoint cycles, and replace each element of each cycle of $\pi$ by the block it belongs to. This leaves us with an $A$-compatible ornament.


We note that the intuition from the example above can be formalized to provide a complete proof of Theorem \ref{GR-bijection}. However, we can use a technical device to make it more clear where the conditions in Theorem \ref{GR-bijection} come from, so we will take this approach instead.


To get the reverse map, we need to take an ornament $\omega$ and then replace, for each $i$, the vertices colored $i$ by the elements of $A_i$. In this way, we go from an ornament to a permutation. We can equivalently think of this as ordering the vertices of $\omega$ so that the elements of $A_1$ come first, then the elements of $A_2$, etc. Our map is injective if and only if, for any ornament $\omega$, there is at most one way to do this that yields an $(A,S)$-permutation.


As in Corollary \ref{walk-corollary}, let $A(v)$ be the signed walk from $v$. Let $P(v) := \{ v' \mid A(v') = A(v) \}$. We call $P(v)$ the \emph{packet} of $v$. We note that $A(v) = A(v')$ if and only if $W(v) = W(v')$ (this observation is justified later, in Lemma \ref{walk-equals-signed-walk}).


Given a sequence $X = x_0,x_1,x_2,\ldots$, define $S(X)$ to be the sequence $x_1,x_2,\ldots$. Also, given a vertex $v$ of an ornament $\omega$, let $s(v)$ denote the successor of $v$ in the relevant cycle. Thus $W(s(v)) = S(W(v))$ and $A(s(v)) = \pm S(A(v))$. Given a packet $P = P(v)$, let $S(P) := P(s(v))$. We call $S(X)$ the \emph{successor sequence} of $X$, $s(v)$ the \emph{successor vertex} of $v$, and $S(P)$ the \emph{successor packet} of $P$.

Given a packet $P$, let $O(P)$ be the orbit of $P$ under the map $S$. We think of $O(P)$ as a necklace of packets. In general, we will call a $1$-repeating necklace of packets an \emph{orbit}. For any packet $P$, $O(P)$ is automatically $1$-repeating, since if it were $r$-repeating for $r > 1$ the packets at corresponding locations in different periods would have the same walks and thus should have been part of the same packet to begin with.

We will call a set of orbits a \emph{template}. We can go from an $A$-compatible ornament to a template by grouping the vertices into packets, then grouping the packets into orbits.

We observe that we can recover a template solely from the walks from each vertex, as opposed to the ornament itself. Furthermore, a template comes from an $A$-compatible ornament if and only if the following two conditions hold:

\begin{itemize}

\item Within each orbit, every packet has the same size.

\item No two orbits have the same color sequence (otherwise they could be grouped into a larger orbit).

\item The number of vertices colored $i$ is $a_i$.

\end{itemize}

If these conditions hold, we will call the template \emph{$A$-compatible}. We note that an $A$-compatible template comes from many different $A$-compatible ornaments, one for each way of partitioning the orbits up into necklaces of vertices (to get an $r$-repeating necklace, we take $r$ vertices from each packet in an orbit). The remarkable thing, and the heart of Theorem \ref{GR-bijection}, is that only one of these corresponds to an $(A,S)$-permutation. This is what we will show next.


\def\yscale{0.87}
\def\xscale{1.0}

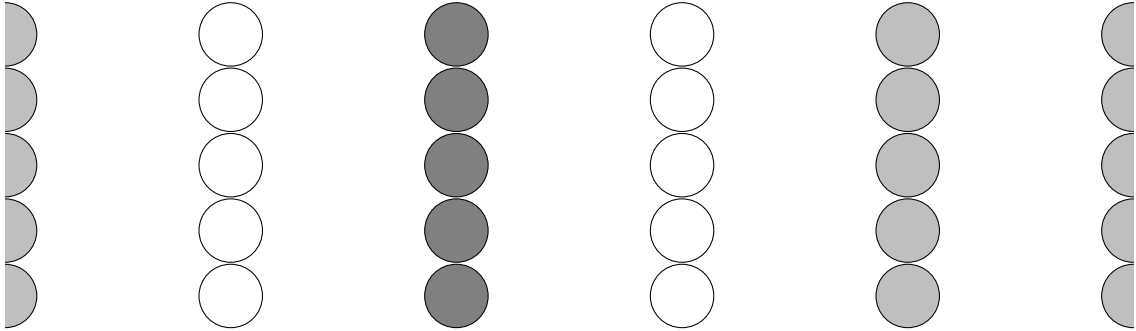
\begin{figure}[p!]

\centering

\begin{tikzpicture}[-triangle 45]
	\tikzstyle{vw}=[circle,draw=black,fill=black!00,minimum size=24pt]
	\tikzstyle{vg}=[circle,draw=black,fill=black!25,minimum size=24pt]
	\tikzstyle{vb}=[circle,draw=black,fill=black!50,minimum size=24pt]

\clip (0,0) rectangle (\xscale*15,\yscale*6);

\foreach \x/\xe/\color in {0/3/vg,3/6/vw,6/9/vb,9/12/vw,12/15/vg,15/0/vg}{
	\foreach \y in {1,2,3,4,5}{
		\node[\color] (P-\x-\y)at (\xscale*\x,\yscale*\y) {};
	}
}

\end{tikzpicture}

\caption{A template with a single orbit consisting of $5$ packets, each with $5$ vertices. The first and last half-vertex are the same. Light grey indicates block $1$, white indicates block $2$, and dark grey indicates block $3$, so $A = (10,10,5)$. Also, $S = \{1,3\}$, so blocks $1$ and $3$ descend while block $2$ ascends.}
\label{GR-template}

\end{figure}

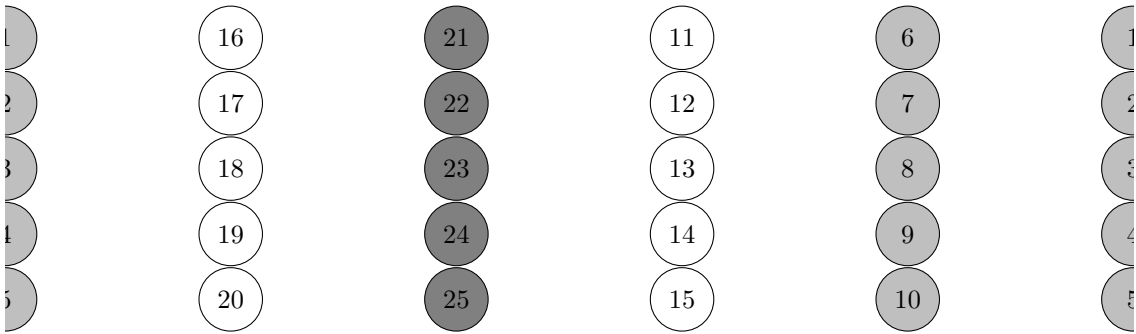
\begin{figure}[p!]

\centering

\begin{tikzpicture}[-triangle 45]
	\tikzstyle{vw}=[circle,draw=black,fill=black!00,minimum size=24pt]
	\tikzstyle{vg}=[circle,draw=black,fill=black!25,minimum size=24pt]
	\tikzstyle{vb}=[circle,draw=black,fill=black!50,minimum size=24pt]

\clip (0,0) rectangle (15,\yscale*6);

\foreach \y/\text in {1/5,2/4,3/3,4/2,5/1}{
	\node[vg] (P-0-\y) at (\xscale*0,\yscale*\y) {\footnotesize{$\text$}};
}

\foreach \y/\text in {1/20,2/19,3/18,4/17,5/16}{
	\node[vw] (P-3-\y) at (\xscale*3,\yscale*\y) {\footnotesize{$\text$}};
}

\foreach \y/\text in {1/25,2/24,3/23,4/22,5/21}{
	\node[vb] (P-6-\y) at (\xscale*6,\yscale*\y) {\footnotesize{$\text$}};
}

\foreach \y/\text in {1/15,2/14,3/13,4/12,5/11}{
	\node[vw] (P-9-\y) at (\xscale*9,\yscale*\y) {\footnotesize{$\text$}};
}

\foreach \y/\text in {1/10,2/9,3/8,4/7,5/6}{
	\node[vg] (P-12-\y) at (\xscale*12,\yscale*\y) {\footnotesize{$\text$}};
}

\foreach \y/\text in {1/5,2/4,3/3,4/2,5/1}{
	\node[vg] (P-15-\y) at (\xscale*15,\yscale*\y) {\footnotesize{$\text$}};
}

\end{tikzpicture}

\caption{The unique way of numbering the vertices in the template of Figure \ref{GR-template} to get an $(A,S)$-permutation, based on Corollary \ref{walk-corollary}.}
\label{GR-numbering}

\end{figure}

\begin{figure}[p!]

\centering

\begin{tikzpicture}[-triangle 45]
	\tikzstyle{vw}=[circle,draw=black,fill=black!00,minimum size=24pt]
	\tikzstyle{vg}=[circle,draw=black,fill=black!25,minimum size=24pt]
	\tikzstyle{vb}=[circle,draw=black,fill=black!50,minimum size=24pt]

\clip (0,0) rectangle (15,\yscale*6);

\foreach \y/\text in {1/5,2/4,3/3,4/2,5/1}{
	\node[vg] (P-0-\y) at (\xscale*0,\yscale*\y) {\footnotesize{$\text$}};
}

\foreach \y/\text in {1/20,2/19,3/18,4/17,5/16}{
	\node[vw] (P-3-\y) at (\xscale*3,\yscale*\y) {\footnotesize{$\text$}};
}

\foreach \y/\text in {1/25,2/24,3/23,4/22,5/21}{
	\node[vb] (P-6-\y) at (\xscale*6,\yscale*\y) {\footnotesize{$\text$}};
}

\foreach \y/\text in {1/15,2/14,3/13,4/12,5/11}{
	\node[vw] (P-9-\y) at (\xscale*9,\yscale*\y) {\footnotesize{$\text$}};
}

\foreach \y/\text in {1/10,2/9,3/8,4/7,5/6}{
	\node[vg] (P-12-\y) at (\xscale*12,\yscale*\y) {\footnotesize{$\text$}};
}

\foreach \y/\text in {1/5,2/4,3/3,4/2,5/1}{
	\node[vg] (P-15-\y) at (\xscale*15,\yscale*\y) {\footnotesize{$\text$}};
}

\foreach \x/\xe/\color in {0/3/vg,3/6/vw,6/9/vb,9/12/vw,12/15/vg,15/0/vg}{
	\ifthenelse{\x < 15}{
		\ifthenelse{\equal{\color}{vw}}{
			\foreach \y/\ye in {1/1,2/2,3/3,4/4,5/5}{
				\draw (P-\x-\y) -- (P-\xe-\ye);
			}
		}
		{
			\foreach \y/\ye in {1/5,2/4,3/3,4/2,5/1}{
				\draw (P-\x-\y) -- (P-\xe-\ye);
			}
		}
	}{}

}

\end{tikzpicture}

\caption{The unique way of choosing successors for the numbered template in Figure \ref{GR-numbering} to yield an $(A,S)$-permutation. The successors are indicated by arrows. Observe that we end up with the $5$-cycle $(3 \ 18 \ 23 \ 13 \ 8)$ and two $10$-cycles.}
\label{GR-successors}

\end{figure}
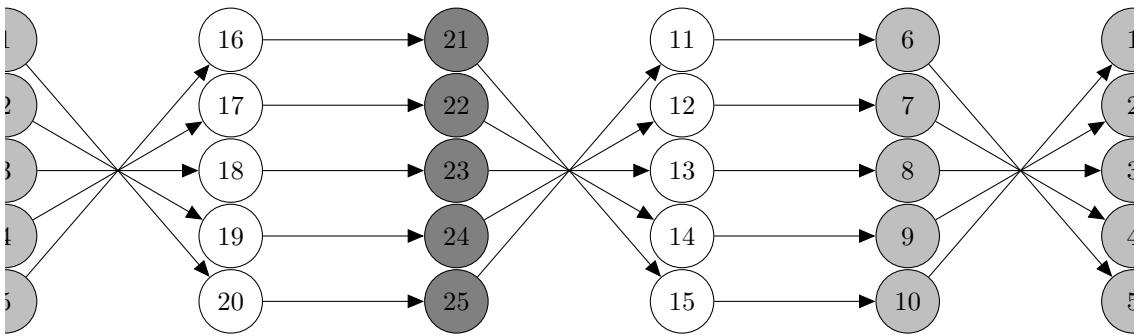

To get a permutation from an $A$-compatible template, we must first label the vertices by the integers $1$ through $n$, then choose the successor of each vertex. There is the constraint that the vertices labeled $i$ are labelled by the elements of $A_i$ and that $s(v)$ lie in $S(P(v))$, but other than that, we can do as we wish. Figures \ref{GR-template}, \ref{GR-numbering}, and \ref{GR-successors} illustrate how to do this for a given template to yield an $(A,S)$-permutation. We describe the general process in the next few paragraphs, as well as prove that it is the unique way to get an $(A,S)$-permutation.

By Corollary \ref{walk-corollary}, there is only one labeling of the vertices that can yield an $(A,S)$-permutation. It is obtained by first listing the vertices $v_1,\ldots,v_n$ of the template so that if $i < j$ then $A(v_i) < A(v_j)$; then labeling $v_i$ with the integer $i$ (ties in $A(v)$ are irrelevant here because all vertices with the same walk are symmetric).

There is also a unique way to pick the successors of each vertex to get an $(A,S)$-permutation. If a packet has vertices from an ascending block, then the successors of the vertices should be ordered in the same way as the vertices themselves (and lie in the successor packet). If a packet has vertices from a descending block, then the successors of the vertices should be ordered in the opposite way that the vertices themselves are ordered (and again lie in the successor packet). This constraint uniquely determines the successors of each vertex, and we also can see that this constraint is sufficient to get an $(A,S)$-permutation, since the prior lexicographic ordering ensures that we only have to worry about vertices within the same packet. 

Now that we have constructed a unique $(A,S)$-permutation, we look at its cycle structure.

If an orbit has an even number (say $d$) of packets from descending blocks and $x$ packets in total, and each packet has size $y$, then that orbit will add $y$ cycles, all of length $x$, to the permutation.

If an orbit has an odd number (say $d$) of packets from descending blocks, then we should get cycles of size $2x$ since one needs to go around the orbit twice to get back to the original vertex. The exception is if $y$ is odd, in which case there is one cycle of length $x$ that contains the vertices in the middle of each packet.

We thus see that, for each orbit, there is a unique set of cycles it could come from to yield an $(A,S)$-permutation. Furthermore, the parameter $d$ in the two paragraphs above is the number of vertices that the fundamental period of a cycle has from descending blocks, and the parameter $y$ is the total number of times that the fundamental period occurs in all cycles (we consider the fundamental period to occur twice in a $2$-repeating cycle). The cycles we end up with correspond exactly to the conditions of Theorem \ref{GR-bijection}: if $d$ is even, then we get $y$ $1$-repeating cycles that are all isomorphic; if $d$ is odd, then we get $\lfloor \frac{y}{2} \rfloor$ isomorphic $2$-repeating cycles, and possibly a lone $1$-repeating cycle if $y$ is also odd. We are therefore done.
\end{proof}


We observe that, in the course of the proof, we also got a bijection between $(A,S)$-permutations and $A$-compatible packets. On the other hand, $A$-compatible packets are in bijection with $A$-compatible ornaments where every necklace is $1$-repeating (by replacing an orbit where every packet has size $y$ by $y$ necklaces whose vertices have the same colors as one encounters when going through the orbit). We will refer to $A$-compatible ornaments where every necklace is $1$-repeating as \emph{$A$-good} ornaments.

For later convenience, we describe a bijection between $(A,S)$-permutations and $A$-good ornaments. We can think of this bijection as what we get if we take the map from $(A,S)$-permutations to $A$-compatible packets, then go from $A$-compatible packets to $A$-good ornaments. However, while following these maps is a bit tricky, the composite map turns out to be quite simple.


As an example, take the permutation depicted in Figure \ref{GR-bijection-example-2}. Under the bijection of Theorem \ref{GR-bijection}, this permutation gets sent to the ornament in Figure \ref{GR-bijection-example}. However, not all necklaces in this ornament are $1$-repeating. In particular, the two $4$-cycles are both $2$-repeating. What we will do is replace each of the $4$-cycles with two $2$-cycles, where the $2$-cycles have the same fundamental period as the $4$-cycles. This leaves us with seven necklaces: a $5$-cycle, a $3$-cycle, and five $2$-cycles, and each necklace is $1$-repeating.


The correspondence in general is documented in Proposition \ref{auxiliary-bijection}.

\begin{proposition}
\label{auxiliary-bijection}
There is a bijection between $(A,S)$-permutations and $A$-good ornaments.
\end{proposition}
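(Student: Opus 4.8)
The plan is to factor the desired map through the notion of a \emph{template} from the proof of Theorem \ref{GR-bijection}, as anticipated in the remark preceding this proposition. That proof already supplies one half of what we need: sending an $(A,S)$-permutation to its image under the Gessel-Reutenauer bijection and then grouping vertices into packets and packets into orbits is a bijection from the $(A,S)$-permutations to the $A$-compatible templates, since the proof shows that each $A$-compatible template arises from exactly one $(A,S)$-permutation. It therefore suffices to construct a bijection between $A$-compatible templates and $A$-good ornaments and to compose the two. The composite will be exactly the simple operation illustrated in the example: apply the Gessel-Reutenauer bijection, then split every $2$-repeating necklace into the two $1$-repeating necklaces equal to its fundamental period, leaving the $1$-repeating necklaces alone.

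First I would define the forward map from templates to ornaments: given an $A$-compatible template, replace each orbit---which has, say, $x$ packets of common size $y$---by $y$ copies of the $1$-repeating necklace whose color sequence is read once around the orbit. I would check the output is $A$-good. It is $A$-compatible because this operation neither changes nor discards any colored vertex, so exactly $a_i$ vertices still carry color $i$; and each necklace produced is genuinely $1$-repeating because the color sequence of an orbit is itself $1$-repeating. This last point is the one worth spelling out: if that color sequence had a proper period, the packets at two corresponding positions would read off the same infinite color sequence, hence the same walk, and so would coincide as packets, contradicting that an orbit is $1$-repeating as a necklace of \emph{distinct} packets.

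For the inverse I would start from an $A$-good ornament and group its necklaces into maximal classes of identical necklaces. A class of $y$ copies of a $1$-repeating necklace with color sequence of length $x$ is declared to be one orbit with $x$ packets each of size $y$, where the walk attached to the packet in position $j$ is the periodic extension of the color sequence starting at position $j$. This recovers a template, and it is $A$-compatible: within each orbit all packets share the size $y$ by construction, the color counts are unchanged, and distinct classes yield orbits with distinct color sequences, which is precisely the condition forbidding two orbits from being merged. The two maps are mutually inverse because an orbit is determined up to isomorphism by its color sequence together with its packet size---the packet walks being forced as periodic extensions---so passing to the multiset of $(\text{color sequence},\text{multiplicity})$ data and back is the identity.

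The main obstacle is exactly the bookkeeping in the previous paragraph: one must be sure the grouping step does not conflate necklaces coming from different orbits, and that it correctly merges the two kinds of contribution, namely the $y$ identical $1$-repeating necklaces arising from an even-descending orbit and the split halves of $2$-repeating necklaces arising from an odd-descending one. I would resolve this using the cycle-structure analysis at the end of the proof of Theorem \ref{GR-bijection}: an orbit with packet size $y$ contributes the same multiset of $y$ identical $1$-repeating necklaces under the composite map in either case---$y$ one-repeating cycles outright when the fundamental period has an even number of descending vertices, and $\lfloor \frac{y}{2} \rfloor$ two-repeating cycles (each splitting into two copies of the fundamental period) plus one leftover copy when the number is odd and $y$ is odd, for a total of $y$. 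Since the color sequence of an orbit is $1$-repeating and, by the no-merging condition, distinct from that of every other orbit, the grouping recovers the orbits unambiguously, and the unique $(A,S)$-permutation can be read back off, completing the proof.
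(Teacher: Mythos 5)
Your proof is correct, but it is organized differently from the paper's own proof. The paper's proof never mentions templates: it simply declares the forward map to be the map of Theorem \ref{GR-bijection} followed by splitting every $2$-repeating cycle into two $1$-repeating copies of its fundamental period, and then exhibits the inverse directly on $A$-good ornaments by a parity case analysis --- for each necklace $\nu$ with an odd number of descending vertices, appearing $c(\nu)$ times, the copies are merged back into $\lfloor c(\nu)/2 \rfloor$ $2$-repeating necklaces plus one leftover copy of $\nu$ when $c(\nu)$ is odd --- relying only on the image characterization stated in Theorem \ref{GR-bijection}. You instead formalize the factorization through $A$-compatible templates, which the paper relegates to the remark preceding the proposition, and you supply the verifications it omits: in particular your aperiodicity argument (a proper period of an orbit's color sequence would force two distinct packets to share a walk) is exactly the right point to spell out, and your final paragraph correctly recovers the paper's simple split description from the cycle-structure analysis. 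The trade-off between the two routes: yours avoids the multiplicity case analysis and makes it conceptually clear why the bijection exists (an $A$-good ornament is precisely the data of a template, a multiset of aperiodic color sequences with multiplicities), but it leans on the internal machinery of the proof of Theorem \ref{GR-bijection} --- the bijection between $(A,S)$-permutations and $A$-compatible templates, which the paper records only in a passing remark rather than as a stated result; the paper's route is self-contained given the theorem's statement and immediately yields the operational description of the map that its later arguments (Proposition \ref{derangement-bijection}, Corollary \ref{GR-4.2}) actually use.
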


\begin{proof}
The bijection in the forward direction is as follows: first take the map in Theorem \ref{GR-bijection}. Then replace every $2$-repeating cycle by two $1$-repeating cycles with the same fundamental period.

The bijection in the reverse direction is as follows: for every necklace $\nu$ with an odd number of vertices from descending blocks, let $c(\nu)$ denote the number of times that necklace appears in the ornament. If $c(\nu)$ is odd, then replace the $c(\nu)$ necklaces with a single necklace identical to $\nu$ and $\frac{c(\nu)-1}{2}$ necklaces that are $2$-repeating and have the same fundamental period as $\nu$. If $c(\nu)$ is even, then replace the $c(\nu)$ necklaces with $\frac{c(\nu)}{2}$ necklaces that are $2$-repeating and have the same fundamental period as $\nu$.
\end{proof}

In the next section we will count the $(A,S)$-derangements. For this, the following proposition will be helpful.

\begin{proposition}
\label{derangement-bijection}
The $(A,S)$-derangements are in bijection with $A$-good ornaments with no $1$-cycles from ascending blocks and an even number of $1$-cycles from each descending block.
\end{proposition}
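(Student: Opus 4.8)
The plan is to restrict the bijection of Proposition~\ref{auxiliary-bijection} to derangements and to identify exactly which $A$-good ornaments arise. Since $\pi$ is a derangement precisely when it has no $1$-cycles, and since the map of Proposition~\ref{auxiliary-bijection} is the composite of the cycle-structure-preserving map of Theorem~\ref{GR-bijection} with the operation of splitting each $2$-repeating necklace into two copies of its fundamental period, I would first record that fixed points of $\pi$ correspond exactly to single-vertex necklaces of the intermediate (Theorem~\ref{GR-bijection}) ornament, and then track how such single-vertex necklaces behave under the passage to the $A$-good ornament.

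The key observation is that a single-vertex necklace in the $A$-good ornament can arise in only two ways: it is already present as a $1$-repeating $1$-cycle in the intermediate ornament, or it is produced by splitting a $2$-repeating $2$-cycle. In the latter case the fundamental period is a single vertex whose descending-vertex count is odd (this is exactly what permits a $2$-repeating necklace by condition~2 of Theorem~\ref{GR-bijection}), so that vertex is colored by a descending color. Consequently the two kinds of colors behave very differently, and I would split into cases.

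For an ascending color $c \notin S$, a single-vertex necklace has an even number (namely zero) of descending vertices, so by condition~1 it is always $1$-repeating and is never created by splitting; hence the $1$-cycles colored $c$ in the $A$-good ornament correspond bijectively to fixed points of $\pi$ lying in block $A_c$, and a derangement forces there to be none. For a descending color $c \in S$, I would run the reverse map of Proposition~\ref{auxiliary-bijection}: a $1$-cycle colored $c$ is a necklace with an odd descending count, so if it occurs $m$ times these copies are regrouped, leaving a lone $1$-cycle (and therefore a fixed point of $\pi$ in $A_c$) exactly when $m$ is odd. Thus $\pi$ has no fixed point in $A_c$ iff $m$ is even. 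Combining the two cases yields precisely the stated condition: no $1$-cycles from ascending blocks and an even number of $1$-cycles from each descending block, and since Proposition~\ref{auxiliary-bijection} is already a bijection, this equivalence immediately cuts it down to the desired bijection between $(A,S)$-derangements and the distinguished $A$-good ornaments.

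The main obstacle is conceptual rather than computational: in the $A$-good picture not every $1$-cycle records a fixed point, since a pair of $1$-cycles colored by the same descending color can instead encode a genuine $2$-cycle of $\pi$. The argument must therefore track the \emph{parity} of the number of such $1$-cycles rather than their mere presence, and the regrouping rule in the reverse direction of Proposition~\ref{auxiliary-bijection} is exactly what converts this parity statement into the characterization. As a consistency check I would note that a descending block can contain at most one fixed point (two fixed points $i<j$ in $A_c$ would force the ascent $\pi(i)<\pi(j)$), which matches condition~3 of Theorem~\ref{GR-bijection} and explains why the leftover lone $1$-cycle, when it exists, is unique.
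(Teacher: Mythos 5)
Your proposal is correct and follows essentially the same route as the paper: both restrict the bijection of Proposition~\ref{auxiliary-bijection} to derangements and characterize the image by noting that $1$-cycles of ascending color record fixed points, while $1$-cycles of descending color encode fixed points only through their parity, since a pair of them regroups into a $2$-cycle of the permutation. Your version simply spells out in more detail (via conditions 1--3 of Theorem~\ref{GR-bijection} and the reverse map) what the paper's terser proof asserts directly.
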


\begin{proof}
By applying Proposition \ref{auxiliary-bijection}, we get an injection from the $(A,S)$-derangements into the $A$-good ornaments. For an $A$-good ornament to correspond to a permutation with no fixed points, it must have no $1$-cycles from ascending blocks. It is permissible for the ornament to have $1$-cycles from descending blocks, however, since a pair of $1$-cycles from a descending block corresponds to a $2$-cycle in the actual $(A,S)$-permutation. We can therefore have any even number of $1$-cycles from each descending block. The $(A,S)$-derangements are thus in bijection with $A$-good ornaments with no $1$-cycles in ascending blocks and an even number of $1$-cycles in each descending block, as was to be shown.
\end{proof}

We record the following corollary for use in another paper \cite{Ste2}.

\begin{corollary}
\label{derangement-bijection-corollary}
The $(A,S)$-derangements are in bijection with $A$-compatible ornaments satisfying the following properties:

\begin{itemize}

\item Every cycle is either $1$-repeating or $2$-repeating.

\item The only $2$-repeating cycles are monochromatic $2$-cycles from a descending block.

\item There are no $1$-cycles.

\end{itemize}

\end{corollary}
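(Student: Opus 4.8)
The plan is to build directly on Proposition \ref{derangement-bijection}, which already gives a bijection between the $(A,S)$-derangements and the $A$-good ornaments having no $1$-cycles from ascending blocks and an even number of $1$-cycles from each descending block. Since an $A$-good ornament has all of its necklaces $1$-repeating, the only work left is to compose this established bijection with a simple repackaging of the descending $1$-cycles into $2$-cycles.

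First I would describe the forward map on the $A$-good ornaments of the type above. For each descending color $i$, such an ornament contains some even number of $1$-cycles of color $i$; I pair these up and replace each pair by a single $2$-repeating $2$-cycle whose fundamental period is the lone vertex of color $i$. Such a necklace is exactly a monochromatic $2$-cycle from the descending block $A_i$. This operation replaces two vertices of color $i$ by two vertices of color $i$, so it preserves all color counts and the result is still $A$-compatible. Moreover, the resulting ornament has no $1$-cycles at all (the ascending $1$-cycles were already forbidden, and every descending $1$-cycle has been absorbed into a $2$-cycle), its only $2$-repeating necklaces are the monochromatic descending $2$-cycles just introduced, and every other necklace remains $1$-repeating. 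Hence every ornament in the image satisfies the three listed conditions.

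The reverse map is the evident inverse: given an $A$-compatible ornament satisfying the three conditions, split each $2$-repeating monochromatic $2$-cycle from a descending block $A_i$ into two $1$-cycles of color $i$. By the first two conditions the only non-$1$-repeating necklaces are precisely these monochromatic descending $2$-cycles, so after splitting every necklace is $1$-repeating and the ornament is $A$-good. By the third condition there were no $1$-cycles to begin with, so all of the $1$-cycles produced come from splitting; hence they occur in pairs (an even number of each descending color) and never come from an ascending block. This is exactly the class of $A$-good ornaments appearing in Proposition \ref{derangement-bijection}, and the pairing and splitting operations are manifestly mutually inverse.

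The one point requiring care—and the place I expect to be most delicate—is matching the three abstract conditions against the concrete ``even number of descending $1$-cycles, no ascending $1$-cycles'' description. Specifically, one must verify that a $2$-repeating monochromatic $2$-cycle from a descending block is the unique kind of necklace that can arise from merging two identical $1$-cycles, and conversely that the first two conditions leave no other $2$-repeating necklaces unaccounted for. Granting this identification, composing the pairing bijection with the bijection of Proposition \ref{derangement-bijection} yields the desired correspondence.
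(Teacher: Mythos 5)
Your proposal is correct and matches the paper's own proof: the paper likewise applies Proposition \ref{derangement-bijection} and then replaces each pair of descending $1$-cycles with a monochromatic $2$-cycle from the same block. The extra verification you flag as delicate is genuinely routine---since all $1$-cycles of a given color are identical, the pairing/splitting operations are canonical and manifestly inverse---so your write-up is simply a more detailed version of the same one-line argument.
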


\begin{proof}
We can apply Proposition \ref{derangement-bijection}, then replace every pair of $1$-cycles from a descending block with a $2$-cycle from the same block.
\end{proof}

We have a few loose ends to tie up, which are to prove Lemma \ref{walk-lemma} and Corollary \ref{walk-corollary} as well as state and prove Lemma \ref{walk-equals-signed-walk}.

\begin{lemma}
\label{walk-equals-signed-walk}
Let $v$ and $v'$ be two vertices. Their walks $W(v)$ and $W(v')$ agree up through $w_i$ if and only if their signed walks $A(v)$ and $A(v')$ agree up through $a_i$.
\end{lemma}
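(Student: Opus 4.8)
The plan is to build everything on a single observation: forming the signed walk only attaches a sign of $\pm 1$ to each color, and since every color $w_j(v)$ is a positive integer in $\{1,\ldots,k\}$, we have $|a_j(v)| = |(-1)^{r_j(v)} w_j(v)| = w_j(v)$ for every $j$ and every vertex $v$. In other words, the unsigned walk $W(v)$ is recovered from the signed walk $A(v)$ by taking absolute values entry-by-entry, while the sign in each entry of $A(v)$ is determined by the earlier entries of $W(v)$. These two facts handle the two directions separately.

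I would dispatch the reverse direction first, since it is immediate from the observation above. Assuming $A(v)$ and $A(v')$ agree through $a_i$, that is $a_j(v) = a_j(v')$ for all $j \le i$, I take absolute values to get $w_j(v) = |a_j(v)| = |a_j(v')| = w_j(v')$ for all $j \le i$, which is exactly the assertion that $W(v)$ and $W(v')$ agree through $w_i$.

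For the forward direction I would argue that agreement of the colors forces agreement of the signs. Here the point is that the sign $(-1)^{r_j(v)}$ depends only on $w_0(v),\ldots,w_{j-1}(v)$, because $r_j(v)$ is by definition the number of those colors belonging to a descending block (i.e.\ lying in $S$). So, assuming $W(v)$ and $W(v')$ agree through $w_i$, for each $j \le i$ every index $0,\ldots,j-1$ lies in the range where the walks agree, whence $r_j(v) = r_j(v')$. Combining this with $w_j(v) = w_j(v')$ gives $a_j(v) = (-1)^{r_j(v)} w_j(v) = (-1)^{r_j(v')} w_j(v') = a_j(v')$ for all $j \le i$, so $A(v)$ and $A(v')$ agree through $a_i$.

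I expect no genuine difficulty in this argument; the only point requiring care is the index bookkeeping in the forward direction, namely the fact that $r_j$ is controlled by the colors in positions strictly below $j$, all of which sit inside the range $0,\ldots,i$ where the two walks are already known to match. This is precisely what allows the sign computation to go through for every $j \le i$ without appealing to any agreement of the walks beyond position $i$.
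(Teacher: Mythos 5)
Your proof is correct and follows essentially the same route as the paper's: the reverse direction by noting $|a_j(v)| = w_j(v)$ since colors are positive, and the forward direction by observing that the sign $(-1)^{r_j(v)}$ is determined by the colors $w_0(v),\ldots,w_{j-1}(v)$, which agree by hypothesis. The only cosmetic difference is that the paper records the (unneeded) stronger fact that $r_j(v)=r_j(v')$ holds for all $j \le i+1$, whereas you stop at $j \le i$, which suffices.
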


\begin{proof}
If their signed walks agree up through $a_i$, their walks must agree up through $w_i$, since $a_i = \pm w_i$ and $w_i > 0$ always.

Now suppose their walks agree up through $w_i$. Then $r_j(v) = r_j(v')$ for all $j \leq i+1$ and $w_j(v) = w_j(v')$ for all $j \leq i$, so $(-1)^{r_j(v)} w_j(v) = (-1)^{r_j(v')} w_j(v')$ for all $j \leq i$. This is the same as saying that $a_j(v) = a_j(v')$ for all $j \leq i$, so we are done.
\end{proof}

\begin{proof}[Proof of Lemma \ref{walk-lemma}]
Let $r_l(v)$ be, as before, the number of vertices in $\{w_0(v),\ldots,w_{l-1}(v)\}$ that come from descending blocks. We are trying to prove that if the walks from $v$ and $v'$ agree through $w_{l-1}$, then $v$ and $v'$ come in the same order as $(-1)^{r_l(v)} w_l(v)$ and $(-1)^{r_l(v')} w_l(v')$.

We proceed by induction on $l$. In the base case $l = 1$, the result is a consequence of the fact that $v$ and $v'$ come from the same block, and if that block is ascending then $v$ and $v'$ are in the same order as their successors, whereas if it is descending they are in the opposite order.

Now suppose that $v$ and $v'$ have sequences of colors that agree through $w_l$. Then they also agree through $w_{l-1}$, so by the inductive hypothesis $v$ and $v'$ come in the same order as $(-1)^{r_l(v)} w_l(v)$ and $(-1)^{r_l(v')} w_l(v')$ since $w_l(v)$ and $w_l(v')$ have the same color. By taking the case $l = 1$ applied to $w_l(v)$ and $w_l(v')$, we know that $w_l(v)$ and $w_l(v')$ come in the same order as $(-1)^{r_1(w_l(v))} w_{l+1}(v)$ and $(-1)^{r_1(w_l(v'))} w_{l+1}(v')$. Hence $v$ and $v'$ come in the same order as $(-1)^{r_l(v)+r_1(w_l(v))} w_{l+1}(v)$ and $(-1)^{r_l(v')+r_1(w_l(v'))} w_{l+1}(v')$. Since $r_l(v)+r_1(w_l(v)) = r_{l+1}(v)$, the lemma follows.
\end{proof} 

\begin{proof}[Proof of Corollary \ref{walk-corollary}]
Suppose that $A(v) < A(v')$ lexicographically. Then there exists an $l$ such that $A(v)$ and $A(v')$ first differ in the $l$th position, so the signed walks from $v$ and $v'$ agree through $a_{l-1}$. By Lemma \ref{walk-equals-signed-walk}, this means that the walks from $v$ and $v'$ agree through $w_{l-1}$, so $v$ and $v'$ come in the same order as $a_l(v)$ and $a_l(v')$. But $a_l(v) < a_l(v')$ by assumption, so $v < v'$, as was to be shown.
\end{proof}

\section{Proofs of the Corollaries}
\label{corollaries}

\begin{corollary}
\label{EFW-symmetric}
Let $\sigma$ be a permutation of $\{1,\ldots,k\}$ and let $\mathcal{C}$ be a conjugacy class in $S_n$. The number of $(a_1,\ldots,a_k,S)$-permutations in $\mathcal{C}$ is the same as the number of $(a_{\sigma(1)},\ldots,a_{\sigma(k)},\sigma(S))$-permutations in $\mathcal{C}$.
\end{corollary}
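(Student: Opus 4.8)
The plan is to prove Corollary \ref{EFW-symmetric} by exhibiting a cycle-structure-preserving bijection between $(a_1,\ldots,a_k,S)$-permutations in $\mathcal{C}$ and $(a_{\sigma(1)},\ldots,a_{\sigma(k)},\sigma(S))$-permutations in $\mathcal{C}$, routed through the ornament description of Theorem \ref{GR-bijection}. The key observation is that the set of $A$-compatible ornaments satisfying conditions (1)--(3) of Theorem \ref{GR-bijection} depends on the data $(a_1,\ldots,a_k,S)$ only through a labeling of vertices by colors, together with the designation of which colors are ``descending.'' Crucially, the three conditions are phrased entirely in terms of the \emph{number} of vertices from descending blocks in a fundamental period (conditions (1), (2)) and whether a necklace has an odd number of descending vertices (condition (3)); they never refer to the particular identity of a color or to the sizes $a_i$ except through the $A$-compatibility count. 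This is exactly the structure that a relabeling of colors can preserve.

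Concretely, first I would fix the permutation $\sigma \in S_k$ and define a map $\Phi_\sigma$ on ornaments that simply recolors each vertex: a vertex colored $i$ is recolored $\sigma(i)$. I would check that $\Phi_\sigma$ carries $A$-compatible ornaments to $A'$-compatible ornaments, where $A' = (a_{\sigma^{-1}(1)},\ldots,a_{\sigma^{-1}(k)})$ --- that is, the new ornament has exactly $a_{\sigma^{-1}(j)}$ vertices of color $j$, which is precisely the $A'$-compatibility condition. (One must be careful to track whether the correct index is $\sigma$ or $\sigma^{-1}$; I would pin this down by a direct count and adjust the statement's matching accordingly.) Second, I would verify that $\Phi_\sigma$ respects the notion of ``descending block'': under the new color set, the descending colors are exactly $\sigma(S)$, so a vertex from a descending block in the old ornament maps to a vertex from a descending block in the new one. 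Since $\Phi_\sigma$ is a bijection on vertices that commutes with taking successors (it does not touch the cycle edges at all), it preserves walks, fundamental periods, the count of descending vertices in any subsequence, and the repetition number $r$ of each necklace. Therefore $\Phi_\sigma$ sends ornaments satisfying conditions (1)--(3) for $(A,S)$ to ornaments satisfying conditions (1)--(3) for $(A',\sigma(S))$, and it is manifestly invertible via $\Phi_{\sigma^{-1}}$.

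Finally, since $\Phi_\sigma$ leaves every necklace structurally unchanged (only the colors move), it preserves cycle structure: the bijection of Theorem \ref{GR-bijection} turns each $(A,S)$-permutation into an ornament whose necklace lengths and multiplicities determine the conjugacy class, and $\Phi_\sigma$ alters none of these. Composing the Theorem \ref{GR-bijection} bijection for $(A,S)$, the recoloring $\Phi_\sigma$, and the inverse of the Theorem \ref{GR-bijection} bijection for $(A',\sigma(S))$ yields a bijection between the two classes of permutations that fixes the conjugacy class $\mathcal{C}$. This establishes the equality of counts.

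The main obstacle I anticipate is purely bookkeeping: getting the indexing of $\sigma$ versus $\sigma^{-1}$ exactly right so that the recolored ornament is $A'$-compatible with the subscripts matching the statement, in which the target block sizes are written as $(a_{\sigma(1)},\ldots,a_{\sigma(k)})$ with descending set $\sigma(S)$. It is easy to produce a valid bijection proving symmetry under \emph{some} relabeling; the care is in confirming it is the specific relabeling claimed. I would resolve this by writing out the color-count identity explicitly for a single color and letting that determine the direction. No genuinely hard step is expected beyond this, since the entire content is that conditions (1)--(3) are invariant under permuting colors and their descending/ascending designation.
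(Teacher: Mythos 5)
Your proposal is, in substance, exactly the paper's proof: the paper's entire argument is the single sentence that the image of the map in Theorem~\ref{GR-bijection} ``doesn't distinguish between the blocks,'' and your recoloring map $\Phi_\sigma$, together with the observation that conditions (1)--(3) and the necklace lengths are untouched by recoloring, is precisely the content hiding behind that sentence. So the method is the same; yours is just written out, and more carefully.

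The one point worth closing out is the $\sigma$-versus-$\sigma^{-1}$ bookkeeping that you flagged and postponed: your suspicion is correct, and in fact it cannot be resolved in favor of the statement's literal indexing. As you computed, recoloring $i \mapsto \sigma(i)$ turns an $(a_1,\ldots,a_k)$-compatible ornament with descending colors $S$ into one with $a_{\sigma^{-1}(j)}$ vertices of color $j$ and descending colors $\sigma(S)$; so your composite bijection proves that the count for $(a_1,\ldots,a_k,S)$ equals that for $(a_{\sigma^{-1}(1)},\ldots,a_{\sigma^{-1}(k)},\sigma(S))$, equivalently for $(a_{\sigma(1)},\ldots,a_{\sigma(k)},\sigma^{-1}(S))$. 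The pairing in the statement as printed, $(a_{\sigma(1)},\ldots,a_{\sigma(k)})$ with $\sigma(S)$, applies $\sigma$ in opposite directions to the sizes and to the descending set, and is false as written: take $k=3$, $(a_1,a_2,a_3)=(1,1,2)$, $S=\{1\}$, and $\sigma(1)=2$, $\sigma(2)=3$, $\sigma(3)=1$, so that $(a_{\sigma(1)},a_{\sigma(2)},a_{\sigma(3)}) = (1,2,1)$ and $\sigma(S)=\{2\}$. The $((1,1,2),\{1\})$-permutations of $S_4$ are those with $\pi(3)<\pi(4)$, while the $((1,2,1),\{2\})$-permutations are those with $\pi(2)>\pi(3)$; the conjugacy class of the identity contains one of the former and none of the latter. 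Since the corollary quantifies over all $\sigma$, this is only an indexing slip in the statement (replace $\sigma(S)$ by $\sigma^{-1}(S)$, or the sizes $a_{\sigma(j)}$ by $a_{\sigma^{-1}(j)}$), not a flaw in the result; your proof, once you fix the matching in the way you describe, establishes the corrected statement. So the only ``gap'' in your proposal is one forced by the statement itself, which your extra care actually exposes and which the paper's one-line proof glosses over.
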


\begin{proof}
The image of the map in Theorem \ref{GR-bijection} doesn't distinguish between the blocks.
\end{proof}

\begin{corollary}
\label{EFW-enumerate}
The number of $(A,S)$-derangements is the coefficient of $x_1^{a_1}\cdots x_k^{a_k}$ in 

\begin{equation}
\label{EFW-GF}
\frac{1}{1-x_1-\cdots-x_k} \left(\frac{\prod_{i \not\in S} (1-x_i)}{\prod_{i \in S} (1+x_i)}\right).
\end{equation}

\noindent Let $l_m = a_m$ if $m \in S$ and let $l_m = 1$ otherwise. The number of $(A,S)$-derangements is also

\begin{equation}
\label{EFW-PIE}
\sum_{0 \leq b_m \leq l_m, m=1,\ldots,k} (-1)^{\sum b_i} \binom{\sum (a_i-b_i)}{a_1-b_1,\ldots,a_k-b_k}.
\end{equation}

\end{corollary}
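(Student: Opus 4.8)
The plan is to count the $(A,S)$-derangements by turning them into ornaments and reading off a generating function in variables $x_1,\dots,x_k$, one per color. By Proposition \ref{derangement-bijection}, the $(A,S)$-derangements are in bijection with the $A$-good ornaments---multisets of $1$-repeating (primitive) necklaces having exactly $a_i$ vertices of each color $i$---subject to two constraints: no $1$-cycle from an ascending color, and an even number of $1$-cycles from each descending color. So it suffices to find a power series whose coefficient of $x^\alpha:=x_1^{a_1}\cdots x_k^{a_k}$ (where $\alpha=(a_1,\dots,a_k)$) counts exactly these constrained ornaments; the two claimed expressions will then be this series and its extracted coefficient.

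First I would record the unconstrained identity
\[
\prod_{\nu\ \mathrm{primitive}} \frac{1}{1-x^\nu} = \frac{1}{1-x_1-\cdots-x_k},
\]
where $\nu$ runs over primitive necklaces and $x^\nu=\prod_i x_i^{(\text{vertices of color }i\text{ in }\nu)}$. The left-hand side is $\sum_\alpha(\#\,A\text{-good ornaments of content }\alpha)\,x^\alpha$, since such an ornament is built by independently choosing how many copies of each primitive necklace to include. Rather than invoke the classical cyclotomic identity, I would get this for free from the paper: Proposition \ref{auxiliary-bijection} with $S=\emptyset$ identifies $A$-good ornaments with $(A,\emptyset)$-permutations, i.e.\ permutations ascending in every block, of which there are $\binom{a_1+\cdots+a_k}{a_1,\dots,a_k}$ (each block is forced increasing, so one only chooses which values fill it); this is exactly the coefficient of $x^\alpha$ in $1/(1-x_1-\cdots-x_k)$.

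Next I would impose the two constraints by altering only the factors attached to the single-vertex necklaces. These are genuinely distinct primitive necklaces, one per color, so their factors can be changed independently. The $1$-cycle of an ascending color $i\notin S$ contributes $1/(1-x_i)$; forbidding it replaces this factor by $1$, i.e.\ multiplies the product by $(1-x_i)$. The $1$-cycle of a descending color $i\in S$ also contributes $1/(1-x_i)=\sum_{m\ge0}x_i^m$; keeping only an even number of copies replaces it by $\sum_{m\ \mathrm{even}}x_i^m=1/(1-x_i^2)$, i.e.\ multiplies the product by $1/(1+x_i)$. Applying all of these turns the base identity into \eqref{EFW-GF}, proving the first formula. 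For \eqref{EFW-PIE} I would simply extract the coefficient: expanding $(1-x_i)=\sum_{b_i=0}^{1}(-1)^{b_i}x_i^{b_i}$ for $i\notin S$ and $1/(1+x_i)=\sum_{b_i\ge0}(-1)^{b_i}x_i^{b_i}$ for $i\in S$ (where $b_i$ effectively runs only up to $l_i=a_i$, higher powers dying when $x_1^{a_1}\cdots x_k^{a_k}$ is extracted), the series becomes $\sum_{0\le b_i\le l_i}(-1)^{\sum b_i}\,x_1^{b_1}\cdots x_k^{b_k}/(1-x_1-\cdots-x_k)$; the coefficient of $x_1^{a_1}\cdots x_k^{a_k}$ in the term indexed by $b=(b_1,\dots,b_k)$ is $\binom{\sum(a_i-b_i)}{a_1-b_1,\dots,a_k-b_k}$ (nonnegative since $b_i\le l_i\le a_i$), and summing gives \eqref{EFW-PIE}.

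The step I expect to be the real work is justifying the base identity together with the claim that exactly the monochromatic $1$-cycles, and nothing else, are touched by the two constraints. One must check that ``primitive'' excludes every longer monochromatic necklace (each such necklace of length $\ell>1$ is $\ell$-repeating), so that each color supplies precisely one $1$-cycle factor $1/(1-x_i)$ that can be isolated and rewritten; once that is clear, the remaining manipulations are routine power-series bookkeeping.
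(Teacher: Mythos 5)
Your proof is correct, but it reaches the result by a genuinely different mechanism than the paper, so a comparison is worthwhile. Both arguments begin with Proposition \ref{derangement-bijection} and both ultimately rest on the fact that there are $\binom{a_1+\cdots+a_k}{a_1,\ldots,a_k}$ $A$-good ornaments (via Theorem \ref{GR-bijection}). The paper, however, proves (\ref{EFW-PIE}) first: it shows bijectively (by deleting $1$-cycles) that the number of $A$-good ornaments with \emph{at least} $b_i$ $1$-cycles of color $i$ is $\binom{\sum(a_i-b_i)}{a_1-b_1,\ldots,a_k-b_k}$, then runs the alternating-sum-of-tails sieve over $0\le b_m\le l_m$ to enforce ``none'' in ascending colors and ``evenly many'' in descending colors, and only afterwards observes that (\ref{EFW-GF}) has the same coefficients. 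You go the other way: you prove (\ref{EFW-GF}) first by writing the generating function for $A$-good ornaments as an Euler product $\prod_\nu (1-x^\nu)^{-1}$ over primitive necklace types---cleverly grounding the identity $\prod_\nu(1-x^\nu)^{-1}=(1-x_1-\cdots-x_k)^{-1}$ coefficient-by-coefficient in the paper's own bijection with ascending permutations, rather than citing the classical cyclotomic/Chen--Fox--Lyndon identity---and then impose the constraints multiplicatively, replacing each $1$-cycle factor $1/(1-x_i)$ by $1$ (ascending) or by $1/(1-x_i^2)$ (descending); (\ref{EFW-PIE}) then falls out by coefficient extraction. The two proofs are algebraically equivalent (the paper's sieve is exactly the coefficient-level shadow of your multiplication by $\prod_{i\notin S}(1-x_i)\prod_{i\in S}(1+x_i)^{-1}$), but yours makes the shape of (\ref{EFW-GF}) combinatorially transparent, with each factor visibly the generating function of one constrained $1$-cycle type, while the paper's version directly answers Eriksen et al.'s request for an inclusion-exclusion proof and never needs the product decomposition. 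One small remark: the step you flag as ``the real work'' is actually immediate---any necklace whose content is the single variable $x_i$ has exactly one vertex, so the factor you isolate is unambiguous, and longer monochromatic necklaces are excluded from the product simply because they are $\ell$-repeating; the genuinely load-bearing step is the product identity itself, which your reduction to the multinomial count handles correctly.
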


\begin{proof}
As in Section \ref{bijections}, we will refer to an $A$-compatible ornament where every necklace is $1$-repeating as an \emph{$A$-good ornament}.

First note that 

\[
\frac{1}{1-x_1-\cdots-x_k} = \sum_{n=0}^{\infty} \left(\sum_{i=1}^k x_i\right)^n = \sum_{c_1,\ldots,c_k=0}^{\infty} \binom{c_1+\cdots+c_k}{c_1,\ldots,c_k} x_1^{c_1}\cdots x_k^{c_k}.
\]

From here it is easy to see that the $x_1^{a_1}\cdots x_k^{a_k}$ coefficient in (\ref{EFW-GF}) is equal to the sum given in (\ref{EFW-PIE}). It thus suffices to establish that (\ref{EFW-PIE}) enumerates the $(A,S)$-derangements.

By Proposition \ref{derangement-bijection}, the $(A,S)$-derangements are in bijection with the $A$-good ornaments with no $1$-cycles in ascending colors and an even number of $1$-cycles in each descending color.

Note that the number of $(a_1,\ldots,a_k)$-good ornaments is $\binom{a_1+\cdots+a_k}{a_1,\ldots,a_k}$. This is because these ornaments are in bijection with the $(a_1,\ldots,a_k)$-ascending permutations by Theorem \ref{GR-bijection}. There are $\binom{a_1+\cdots+a_k}{a_1,\ldots,a_k}$ $(a_1,\ldots,a_k)$-ascending permutations because, once we determine the set of permutation values within each block, there is exactly one way to order them to be increasing.

Also, the number of $(a_1,\ldots,a_k)$-good ornaments with at least $b_i$ $1$-cycles of color $i$ is $\binom{(a_1-b_1)+\cdots+(a_k-b_k)}{a_1-b_1,\ldots,a_k-b_k}$. This is because they are in bijection with the $(a_1-b_1,\ldots,a_k-b_k)$-good ornaments (the bijection comes from removing, for each $i$, $b_i$ of the $1$-cycles of color $i$).

Now if $f(b_1,\ldots,b_k)$ is the number of $(a_1,\ldots,a_k)$-good ornaments with at least $b_i$ $1$-cycles of color $i$, then a standard inclusion-exclusion argument shows that the number of ornaments with an even number of $1$-cycles in descending colors and no $1$-cycles in ascending colors is

\[
\sum_{0 \leq b_m \leq l_m, m=1,\ldots,k} (-1)^{\sum b_i} f(b_1,\ldots,b_k)
\]

\noindent where $l_m = a_m$ if $m \in S$ and $l_m = 1$ if $m \not\in S$. Since we know that $f(b_1,\ldots,b_k) = \binom{(a_1-b_1)+\cdots+(a_k-b_k)}{a_1-b_1,\ldots,a_k-b_k}$, (\ref{EFW-PIE}) follows.
\end{proof}

\begin{corollary}
\label{GR-4.1}
Associate to each conjugacy class $\mathcal{C}$ of $S_n$ a partition $\lambda$ of $n$ based on cycle structure.

The number of $(A,S)$-permutations in $\mathcal{C}$ is the same if we replace $S$ by $\{1,\ldots,k\}\backslash S$, assuming that all odd parts of $\lambda$ are distinct and $\lambda$ has no parts congruent to $2$ mod $4$.
\end{corollary}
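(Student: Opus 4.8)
The plan is to work entirely on the ornament side, using the cycle-structure-preserving bijection of Theorem \ref{GR-bijection}. That bijection identifies the $(A,S)$-permutations in $\mathcal{C}$ with exactly the $A$-compatible ornaments of cycle type $\lambda$ satisfying conditions (1)--(3), and it identifies the $(A,\{1,\ldots,k\}\setminus S)$-permutations in $\mathcal{C}$ with the $A$-compatible ornaments of the same cycle type $\lambda$ satisfying (1)--(3) when instead the colors of $\{1,\ldots,k\}\setminus S$ are declared descending. Since a necklace of length $\ell$ always corresponds to a cycle of length $\ell$, both sets sit inside the single set of $A$-compatible ornaments of cycle type $\lambda$, and the color counts $a_i$ are the same in both descriptions. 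The strategy is therefore to show that, under the hypotheses on $\lambda$, these two collections are \emph{literally the same set}; the identity map is then the desired bijection and the two counts coincide.

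The first observation is that replacing $S$ by its complement replaces, in every fundamental period of length $p$, the number $d$ of descending vertices by $p-d$. Hence conditions (1)--(3) can only change through the parity of $d$, and that parity is preserved precisely when $p$ is even. The whole argument thus reduces to parity bookkeeping on fundamental-period lengths, which is exactly what the hypotheses on $\lambda$ control. I would then split into cases by the length $\ell$ of a necklace, i.e.\ by a part of $\lambda$.

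For odd $\ell$ a necklace cannot be $2$-repeating, so admissibility forces it to be $1$-repeating; conditions (1) and (2) then exclude nothing, and the only potentially $S$-dependent constraint is condition (3), which would forbid multiplicity greater than $1$ when the descending count is odd. But because the odd parts of $\lambda$ are distinct, each odd length occurs at most once among the necklaces of an ornament of cycle type $\lambda$, so condition (3) is vacuous and admissibility is independent of which of $S$, $\{1,\ldots,k\}\setminus S$ is descending. For even $\ell$, the hypothesis that $\lambda$ has no part congruent to $2$ modulo $4$ gives $\ell \equiv 0 \pmod 4$, so both $\ell$ and any fundamental-period length $\ell/2$ of a $2$-repeating necklace are even; in every relevant period $d$ and $p-d$ then have the same parity. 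Checking conditions (1), (2), (3) term by term shows that a necklace is admissible with a given multiplicity under $S$ if and only if it is under the complement. Combining the two cases, the set of admissible ornaments is unchanged, proving the corollary.

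The main obstacle I anticipate is the bookkeeping for $2$-repeating necklaces: here the relevant condition is phrased in terms of the fundamental period of length $\ell/2$ rather than the whole necklace, so one must verify carefully that it is exactly the exclusion of parts congruent to $2$ modulo $4$ that keeps $\ell/2$ even, and hence keeps the descending-vertex parity invariant under complementation. One should also take care to confirm that condition (3) genuinely becomes vacuous for odd lengths --- this, rather than a mere matching of nontrivial constraints, is where distinctness of the odd parts is essential.
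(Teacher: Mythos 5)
Your proposal is correct and follows essentially the same route as the paper's own proof: pass to ornaments via Theorem \ref{GR-bijection}, then show by parity bookkeeping that the admissibility conditions (1)--(3) are invariant under complementing $S$, using distinctness of odd parts to make condition (3) vacuous for odd-length necklaces and the ``no parts $\equiv 2 \pmod 4$'' hypothesis to keep fundamental-period lengths even for even-length necklaces. The only cosmetic difference is that you assert the two sets of admissible ornaments are literally equal, while the paper proves one inclusion and invokes the involutive nature of complementation --- the same argument in slightly different packaging.
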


\begin{proof}
We will take an ornament that satisfies the conditions of Theorem \ref{GR-bijection}, then show that it still satisfies the conditions of Theorem \ref{GR-bijection} if we make each ascending block a descending block and vice versa. This would provide an injection from the $(A,S)$-permutations in $\mathcal{C}$ and the $(A,\{1,\ldots,k\}\backslash S)$-permutations in $\mathcal{C}$. Since taking the complement of $S$ twice yields $S$ again, this is sufficient.

Suppose we have an ornament $\omega$ that satisfies the conditions of Theorem \ref{GR-bijection}. Then (i) every necklace with an even number of vertices from descending blocks in its fundamental period is $1$-repeating, (ii) every necklace with an odd number of vertices from descending blocks in its fundamental period is either $1$-repeating or $2$-repeating, and (iii) no two necklaces with an odd number of vertices from descending blocks are isomorphic.

If a necklace has an even number of total vertices, then the conditions on $\lambda$ ensure that the number of vertices in the cycle is divisible by $4$. Since every necklace is at most $2$-repeating, this means that the size of the fundamental period must be even. In this case, the number of vertices from ascending and descending blocks in the fundamental period has the same parity. Therefore, whether the necklace satisfies the hypotheses of (i), (ii), and (iii) remains unchanged when we replace $S$ by its complement; since we are left with the same necklace, whether that necklace satisfies the conclusions of (i), (ii), and (iii) also remains unchanged.

If a necklace has an odd number of total vertices, then the conditions on $\lambda$ imply that it is the only necklace with that many vertices and thus cannot be isomorphic to any other necklace. Thus the conclusion of (iii) is automatically satisfied. The necklace also cannot be $2$-repeating, since it has an odd number of total vertices, so the conclusions of (i) and (ii) combine to say that, in all cases, the necklace must be $1$-repeating. This condition is independent of $S$, so whether this necklace satisfies the conditions imposed by (i), (ii), and (iii) does not change if we replace $S$ by its complement.

We have shown that an ornament satisfying the conditions of Theorem \ref{GR-bijection} will still do so if we replace $S$ by its complement, so we are done.
\end{proof}

\begin{corollary}
\label{GR-4.2}
The number of $(A,S)$-involutions is the same if we replace $S$ by $\{1,\ldots,k\}\backslash S$.
\end{corollary}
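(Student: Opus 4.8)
The plan is to route everything through the $A$-good ornaments of Proposition \ref{auxiliary-bijection}, whose defining data (an $A$-compatible ornament in which every necklace is $1$-repeating) does not mention $S$ at all. By Proposition \ref{auxiliary-bijection}, for each choice of $S$ the $(A,S)$-permutations are in bijection with the \emph{full} set of $A$-good ornaments; what varies with $S$ is only which $A$-good ornaments happen to correspond to involutions. So I would first pin down exactly which $A$-good ornaments are the images of $(A,S)$-involutions, then check that this subset is literally the same for $S$ and for $\{1,\ldots,k\}\setminus S$. Composing the forward map for $S$ with the inverse map for the complement then yields the desired bijection.

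First I would determine the image by reading off cycle lengths from the reverse direction of Proposition \ref{auxiliary-bijection}, using that the map of Theorem \ref{GR-bijection} preserves cycle structure. A necklace with an even number of descending vertices is kept unchanged and becomes a cycle of the same length, so for an involution it must have size $1$ (an ascending $1$-cycle) or size $2$; since an $A$-good ornament has no monochromatic $2$-cycles (a repeated color is not $1$-repeating), such a size-$2$ necklace is bichromatic with either two ascending or two descending vertices. A necklace $\nu$ with an odd number of descending vertices occurring with multiplicity $c(\nu)$ contributes $\lfloor c(\nu)/2\rfloor$ cycles of length $2|\nu|$, together with a single cycle of length $|\nu|$ when $c(\nu)$ is odd; for this to stay within sizes $1$ and $2$ I need either $|\nu|=1$ (a descending $1$-cycle, allowed with any multiplicity, producing fixed points and monochromatic descending $2$-cycles in the permutation) or $|\nu|=2$ with $c(\nu)\le 1$ (a bichromatic $2$-cycle having exactly one descending color). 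The upshot is a clean description: the image consists of all $A$-good ornaments whose necklaces all have size at most $2$, subject only to the extra constraint that a bichromatic $2$-cycle with exactly one descending color occurs at most once.

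The key observation is that this description depends on $S$ only through the notion of a bichromatic $2$-cycle having \emph{exactly one} descending color, and a pair of colors $\{i,j\}$ has exactly one color in $S$ if and only if it has exactly one color in $\{1,\ldots,k\}\setminus S$. Hence the constrained set of $A$-good ornaments is identical for $S$ and its complement, and the two bijections of Proposition \ref{auxiliary-bijection} identify the $(A,S)$-involutions with the $(A,\{1,\ldots,k\}\setminus S)$-involutions. I expect the only real work to be the case analysis establishing the characterization of the image; once that is in hand, the symmetry of the ``exactly one descending color'' condition makes the conclusion immediate. As sanity checks I would verify the degenerate case $k=1$ (the only involution allowed is forced regardless of $S$) and the case $A=(2,2)$, where the constraint genuinely excludes the ornament consisting of two copies of the bichromatic $2$-cycle precisely when the two colors differ in their $S$-membership, and this exclusion is indeed insensitive to replacing $S$ by its complement.
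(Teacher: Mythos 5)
Your proposal is correct and follows essentially the same route as the paper: both identify the image of the $(A,S)$-involutions under Proposition \ref{auxiliary-bijection} as the $A$-good ornaments whose necklaces have size at most $2$ and in which a $2$-cycle with exactly one descending vertex occurs at most once, and both conclude via the observation that ``exactly one vertex in $S$'' is invariant under replacing $S$ by its complement. The only cosmetic difference is that the paper reaches this characterization by first describing the image under Theorem \ref{GR-bijection} and then splitting monochromatic descending $2$-cycles into pairs of $1$-cycles, whereas you read it off directly from the reverse map of Proposition \ref{auxiliary-bijection}.
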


\begin{proof}
By Theorem \ref{GR-bijection}, the $(A,S)$-involutions are in bijection with $A$-compatible ornaments consisting only of $1$-cycles and $2$-cycles with the following conditions: (i) any $2$-cycle has vertices of distinct colors or both vertices come from the same descending block; if a $2$-cycle has exactly one vertex from a descending block, then it is not isomorphic to any other $2$-cycle; and (iii) there is at most a single $1$-cycle from each descending block.

By replacing each monochromatic $2$-cycle by two $1$-cycles of the same color, we get a bijection between the $A$-compatible ornaments described above and $A$-compatible ornaments such that (i$^\prime$) there are only $1$-cycles and $2$-cycles; (ii$^\prime$) any $2$-cycle has vertices of distinct colors; and (iii$^\prime$) if a $2$-cycle has exactly one vertex from a descending block, then it is not isomorphic to any other $2$-cycle. This latter set can also be realized as the image of the $(A,S)$-involutions under Proposition \ref{auxiliary-bijection}. Composing the bijection of the preceeding paragraph with the bijection in this paragraph gives us a bijection between $(A,S)$-involutions and $A$-compatible ornaments satisfying (i$^\prime$), (ii$^\prime$), and (iii$^\prime$).

We finally observe that if we replace $S$ by its complement, then condition (ii$^\prime$) does not change, since any cycle with exactly one descending vertex also has exactly one ascending vertex. Also, conditions (i$^\prime$) and (iii$^\prime$) do not change because they have nothing to do with whether a block is ascending or descending. Therefore, the ornaments that correspond to $(A,S)$-involutions also correspond to $(A,\{1,\ldots,k\}\backslash S)$-involutions. We can replace $S$ with $\{1,\ldots,k\}\backslash S$ in the preceeding argument to see that it is also the case that the ornaments corresponding to $(A,\{1,\ldots,k\}\backslash S)$-involutions also correspond to $(A,S)$-permutations, so we are done.
\end{proof}

\section{Conclusion and Open Problems}
\label{conclusion}


In this paper we have always considered the Gessel-Reutenauer bijection applied to $(A,S)$-permutations, but we could just as easily look at it as a map on all permutations at once or perhaps restricted to other special classes of permutations. The general map is just a map $\Phi$ that takes a permutation, writes it in cycle notation, and replaces each element in each cycle by the block it belongs to. Of course, $\Phi$ will not be injective in general.


One interesting class of permutations consists of permutations that are split into blocks of length $a_1,\ldots,a_k$, and such that the relative ordering of the permutation values within block $i$ agrees with some pre-determined permutation $\pi_i$---so $(a_1,\ldots,a_k)$-ascending permutations would be the special case when $\pi_i$ is the identity permutation for all $i$. We will call the more general case an $((a_1,\pi_1),\ldots,(a_k,\pi_k))$-permutation. For example, a $((3,312),(2,12))$-permutation would be a permutation $\pi$ such that $\pi(2) < \pi(3) < \pi(1)$ and $\pi(4) < \pi(5)$.

Unfortunately, the map $\Phi$ is not usually injective when applied to the $((a_1,\pi_1),\ldots,(a_k,\pi_k))$-permutations. For instance, let $((a_1,\pi_1),(a_2,\pi_2)) = ((3,132),(1,1))$. Then $\Phi(1423) = \Phi(2431)$, as they both yield a $1$-cycle with color $1$ and a $3$-cycle with colors $1,1,2$. If one considers large enough permutations, one can also find cases where $\Phi$ maps $3$ permutations to the same ornament. In fact, it appears that $\Phi$ fails to be injective for some set of permutations $\pi_1,\ldots,\pi_k$ whenever there is some $i$ with $a_i > 2$. (This is the smallest case that allows for a permutation $\pi_1$ that is neither always ascending or always descending.) Also, the sizes of the fibres of $\Phi$ appear to vary. Thus there is no obvious structure that is preserved, at least in terms of injectivity, when we look at $((a_1,\pi_1),\ldots,(a_k,\pi_k))$-permutations.


It also appears that Corollary \ref{EFW-symmetric} does not hold in this case. In particular, the number of $((3,123),(3,132))$-permutations in the conjugacy class $(1,2,3)$ is not the same as the number of $((3,132),(3,123))$-permutations in the conjugacy class $(1,2,3)$. (There is a single one, $\pi = 134265$ in the first case, and there are none in the second case.)

Nevertheless, one could perhaps show that when the number of inversions in each $\pi_i$ is bounded, so is the size of the pre-image of each ornament under $\Phi$, provided that we hold the number of blocks constant. One could also ask for ways to determine, for a given ornament, what the pre-image of $\Phi$ looks like. Perhaps there is some generalization of Lemma \ref{walk-lemma} that would hold. Since $\Phi$ is not injective in general, the conclusion would necessarily have to be weaker, but perhaps one could find a nice partial order or multi-dimensional sequence to use that would help to determine the relative order of two vertices in an ornament.


We could also look at the map $\Phi'$ that takes a permutation, applies $\Phi$ to obtain an $A$-compatible ornament, then obtains an $A$-compatible template from the ornament. We could ask the same sorts of questions about $\Phi'$, and will probably be more successful, since the bijection between $(A,S)$-permutations and $A$-compatible templates seems to be the most natural of all the bijections presented in this paper. The tradeoff is that the $A$-compatible templates deal less directly with cycle structure, but it should still be fairly easy to figure out the cycle structure in most cases. I believe that to understand $\Phi$, one should start by trying to understand $\Phi'$.


Another loose end is a polynomial identity from \cite{EFW}. Let $f_{\lambda}(n)$ be the generating function for permutations in $S_n$ by number of fixed points. In other words, the $\lambda^k$ coefficient in $f_{\lambda}(n)$ is the number of permutations in $S_n$ with $k$ fixed points. Eriksen et al.\ show that the number of $(a_1,\ldots,a_k)$-descending derangements is

\begin{equation}
\label{mystery-poly}
\frac{1}{a_1!\cdots a_k!} \sum_{T \subset \{1,\ldots,n\}} (-1)^{|T|} f_{\lambda}(|\{1,\ldots,n\} \backslash T|) \prod_{i=1}^k f_{\lambda}(|A_i \cap T|).
\end{equation}

This polynomial does not depend on $\lambda$, and it counts the $(a_1,\ldots,a_k)$-descending derangements. Can we generalize (\ref{mystery-poly}) to count the number of $(A,S)$-derangements? The polynomial in (\ref{mystery-poly}) looks like the sort of sum one might expect to get after applying the P\'{o}lya enumeration theorem. Can we explain why this is the case? We have made some progress towards explaining (\ref{mystery-poly}) in \cite{Ste2}, but we consider it still to be unsatisfactory.


A final direction of further inquiry involves a simpler proof of one of our results. In the proof of Corollary \ref{EFW-enumerate}, we used the fact that there are $\binom{n}{a_1,\ldots,a_k}$ $A$-good ornaments. The enumeration of $A$-good ornaments required Theorem \ref{GR-bijection}. However, the simplicity of both the question ``How many $A$-good ornaments are there?'' and its answer suggests that there should be a more direct proof that there are $\binom{n}{a_1,\ldots,a_k}$ $A$-good ornaments. Can we find such a proof? We could equivalently ask for a simple enumeration of the $A$-compatible templates.

\section{Acknowledgements}


This research was supervised by Joe Gallian at the University of Minnesota Duluth, supported by the National Science Foundation and the Department of Defense (grant number DMS 0754106) and the National Security Agency (grant number H98230-06-1-0013).

In addition to Joe Gallian, the author thanks Reid Barton, Paul Christiano, Ricky Liu, Phil Matchett Wood, and Aaron Pixton for help with the paper itself. He particularly thanks Ricky Liu and Reid Barton for pointing out the idea of templates for use in proving Theorem \ref{GR-bijection}. He also thanks Richard Peng for pointing out a correspondence between the Gessel-Reutenauer bijection and the Burrows-Wheeler transform. Finally, he thanks Geir Helleloid, Adam Hesterburg, Nathan Kaplan, Nathan Pflueger, and Yufei Zhao for helpful conversations.

\section{Glossary}
\label{defns}

This section is intended for reference only. All necessary definitions will also be given either in the introduction or the body of the paper.

\begin{itemize}

\item ascent: an index $i$ of a permutation $\pi$ on $\{1,\ldots,n\}$ such that $\pi(i) < \pi(i+1)$
\item descent: an index $i$ of a permutation $\pi$ on $\{1,\ldots,n\}$ such that $\pi(i) > \pi(i+1)$
\item $(a_1,\ldots,a_k)$-ascending permutation: a permutation that ascends in consecutive blocks of lengths $a_1,\ldots,a_k$; in other words, its descent set is contained in $\{a_1,a_1+a_2,\ldots,a_1+\cdots+a_{k-1}\}$. The blocks are referred to as $A_1,\ldots,A_k$, so, for example, $A_1 = \{1,\ldots,a_1\}$.
\item $(a_1,\ldots,a_k)$-descending permutation: a permutation that descends in consecutive blocks of lengths $a_1,\ldots,a_k$. Once again the blocks are referred to as $A_1,\ldots,A_k$.
\item $(a_1,\ldots,a_k,S)$-permutation or $(A,S)$-permutation (in this case $A = (a_1,\ldots,a_k)$ implicitly): a permutation that, when split into blocks $A_1,\ldots,A_k$ of lengths $a_1,\ldots,a_k$, descends in the blocks $A_i$ for $i \in S$ and ascends in the other blocks.
\item necklace: a directed cycle whose vertices are either colored or labelled; Figure \ref{necklace-example} has an example of a necklace. Necklaces are also sometimes called cycles.
\item fundamental period: the smallest contiguous subsequence $P$ of a necklace such that the necklace is $r$ copies of $P$ for some $r$.
\item $r$-repeating: a necklace is $r$-repeating for the value of $r$ in the preceding definition
\item ornament: a multiset of necklaces; Figure \ref{necklace-example} has an example of an ornament.
\item $(a_1,\ldots,a_k)$-compatible ornament or $A$-compatible ornament: an ornament whose vertices are colored by the integers $\{1,\ldots,k\}$ such that $a_i$ vertices are colored by $i$. In this case we are either implicitly or explicitly considering $(A,S)$-permutations as well, so there is an associated subset $S$ of $\{1,\ldots,k\}$.
\item ascending color: a color that does not lie in the set $S$ in the above definition
\item descending color: a color that does lie in the set $S$
\item $(a_1,\ldots,a_k)$-good ornament or $A$-good ornament: an $A$-compatible ornament such that every necklace is $1$-repeating
\item walk: the sequence $W(v)$ defined in Lemma \ref{walk-lemma}
\item signed walk: the sequence $A(v)$ defined in Corollary \ref{walk-corollary}
\item packet: given a vertex $v$ of an ornament, the packet of $v$ is the set $P(v) := \{v' \mid A(v') = A(v)\}$
\item successor sequence: given a sequence $X = x_0,x_1,x_2,\ldots$, the successor sequence is the sequence $S(X) = x_1,x_2,\ldots$
\item successor vertex: given a vertex $v$ in an ornament, the successor vertex $s(v)$ is the next vertex after $v$ in the relevant cycle
\item successor packet: the successor packet of a packet $P = P(v)$ is the packet $S(P) := P(s(v))$
\item orbit: given a packet $P$, the orbit $O(P)$ of $P$ is the orbit of $P$ under the map $S$, thought of as a necklace of packets; it is automatically $1$-repeating
\item template: a set of orbits
\item $A$-compatible template: a template such that every packet in a given orbit has the same size, no two orbits have the same color sequence, and there are $a_i$ vertices in total colored $i$

\end{itemize}

\nocite{*}

\bibliographystyle{plain}
\bibliography{GR}

\end{document}